     \newenvironment{problist}
    \newtheorem{thm}{Theorem}
    \newtheorem{lem}[thm]   {Lemma}
    \newtheorem{cor}[thm]   {Corollary}
    \newtheorem{prop}[thm]  {Proposition}
    \newcommand{\term}[1]   {{\bf #1}\index{#1}}
    \newcommand{\inclds}     {\hookrightarrow}
\newcommand{\?}{\, ?\, }
\newcommand{\st}{\mid}
    \newcommand{\A}        {\mathcal{A}}
    \newcommand{\B}			{\mathcal{B}}
    \newcommand{\C}        {\mathcal{C}}
    \newcommand{\F}        {\mathcal{F}}
                \newcommand{\K}        {\mathcal{K}}
\newcommand{\R}  {\mathcal{R}}
\newcommand{\I}{\mathcal{I}}
\newcommand{\U}{\mathcal{U}}
    \newcommand{\ZZ}    {\mathbb{Z}}
    \newcommand{\NN}    {\mathbb{N}}
    \newcommand{\FF}    {\mathbb{F}}
    \newcommand{\QQ}    {\mathbb{Q}}
    \newcommand{\s}     {\Sigma}
    \newcommand{\smsh}  {\wedge}
    \newcommand{\om}    {\Omega}
    \newcommand{\wdg}   {\vee}
    \newcommand{\of}    {\circ}
    \newcommand{\id}    {\mathrm{id}}
    \newcommand{\twdl}  {\widetilde}
    \newcommand{\sseq}  {\subseteq}
\renewcommand{\C}{\mathcal{C}}
\DeclareMathOperator{\limone}{lim^1}
\DeclareMathOperator{\holim}{holim}
            \DeclareMathOperator{\cl}    {cl}
 \DeclareMathOperator{\im}    {im}
    \DeclareMathOperator{\Hom}   {Hom}
    \DeclareMathOperator{\Ext}   {Ext}
        \DeclareMathOperator{\conn}{conn}
 \DeclareMathOperator{\Ph}    {Ph}
        \DeclareMathOperator{\map}    {map}
\newcommand{\mmathand}{\qquad\mbox{and}\qquad}
\begin{document}
	
	



\title{Finite-Dimensional Spaces in Resolving Classes}



\author{Jeffrey Strom}
\address{Department of Mathematics\\
Western Michigan University\\
Kalamazoo, MI\\
49008-5200 USA}
\email{Jeff.Strom@wmich.edu}	

\subjclass[2010]
{Primary 55S37, 55R35; 
Secondary 55S10}  

\keywords{Sullivan conjecture,   resolving class, resolving kernel,
homotopy limit, cone length, phantom map, Massey-Peterson tower,
$T$ functor, Steenrod algebra, unstable module}


\begin{abstract}
Using the theory of resolving classes, we show that 
if $X$ is a CW complex of finite type such that  
$\map_*(X, S^{2n+1})\sim *$ for all sufficiently large $n$, 
then $\map_*(X, K) \sim *$ for every simply-connected
finite-dimensional CW complex $K$; and under mild 
hypotheses on $\pi_1(X)$, the same conclusion holds
for \textit{all}   finite-dimensional
complexes $K$. 
Since it is comparatively easy to prove the former condition
for $X = B\ZZ/p$ (we give a proof  in 
an appendix), this result can be applied to give a new, 
more elementary 
proof of the Sullivan conjecture.
\end{abstract}

\maketitle
\pagestyle{myheadings}
\markboth{{J. Strom}}
{{A simple 
homotopy-theoretical
proof of the Sullivan conjecture}}


\section*{Introduction}

Haynes 
Miller proved the Sullivan conjecture 
(that 
the space
of pointed
maps from $B\ZZ/p$ to $K$
 is weakly contractible for all finite-dimensional 
CW complexes $K$)
in the seminal paper \cite{MR750716}.
The heart of Miller's proof  is 
  a herculean feat of pure algebra:  he shows
   that the $E_2$-terms of certain
 Bousfield-Kan spectral sequences---involving homological 
 algebra in the nonabelian category of unstable algebras over
 the Steenrod algebra---vanish.
 %
%
 %
Around the same time, using simpler Massey-Peterson techniques
and   ordinary homological algebra of unstable \textit{modules}
over the Steenrod algebra, he showed that 
$\map_*(B\ZZ/p, S^{2n+1}) \sim *$ for all $n\geq 1$
\cite{MR764593}.
Our goal is to show by purely homotopy-theoretical 
methods that this easier result 
implies the full Sullivan conjecture.

\begin{thm}
\label{thm:main}
Let $X$ be a CW complex of finite type.  
Then the following are 
equivalent:
\begin{enumerate}
\item 
$\map_*(X, S^n)\sim *$ for all sufficiently large $n\equiv 1$ mod  $k$
for some $k$
\item
  $\map_*(X, K)\sim *$ for all simply-connected
finite-dimensional CW complexes $K$.
\end{enumerate}
Furthermore, 
if $\pi_1(X)$ has no nontrivial perfect quotients\footnote{such  groups are
sometimes 
described as \term{hypoabelian}}, then there is no need to restrict the
fundamental group of $K$.
\end{thm}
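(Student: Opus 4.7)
The direction (2)$\implies$(1) is immediate, since every odd sphere $S^n$ is simply-connected and finite-dimensional. The substance lies in (1)$\implies$(2), and the natural framework is the \emph{resolving class}
\[
\R(X) \;=\; \{\,Y \st \map_*(X, Y) \sim *\,\}.
\]
One checks formally that $\R(X)$ is closed under weak equivalences, retracts, and pointed homotopy limits over small diagrams; in particular, if $F \to E \to B$ is a principal fibration with $F, B \in \R(X)$, then $E \in \R(X)$, directly from the induced fibration of mapping spaces. The task is to show that under hypothesis (1), these closure properties are strong enough to force every simply-connected, finite-dimensional CW complex $K$ to lie in $\R(X)$.

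The plan is a two-stage resolution. \emph{Stage one:} upgrade the sphere hypothesis to Eilenberg--MacLane spaces, i.e.\ show that $K(A, m) \in \R(X)$ for every finitely generated abelian group $A$ and every sufficiently large $m$ in a suitable arithmetic progression. Starting from the odd spheres placed in $\R(X)$ by (1), I would use the principal Postnikov tower of the Hurewicz map $S^n \to K(\ZZ, n)$, together with parallel Massey--Peterson towers whose layers are generalized Eilenberg--MacLane spaces, to bootstrap successively to $K(\ZZ, m)$ and $K(\ZZ/p^j, m)$; the induction closes because each new layer one encounters is itself assembled out of sphere-maps (or previously resolved Eilenberg--MacLane spaces) in the arithmetic progression already under control. \emph{Stage two:} view a simply-connected finite-dimensional $K$ as the inverse limit of its Postnikov tower. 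Each Postnikov section is built from the previous by a principal $K(\pi_n(K), n)$-fibration; after shifting to land in the right arithmetic progression, Stage one places the fiber in $\R(X)$, and closure under principal fibrations together with the towered inverse limit places $K \in \R(X)$.

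\emph{Hypoabelian case:} to drop simply-connectedness of $K$, I would use the universal-cover fibration $\twdl K \to K \to B\pi_1(K)$. Because $\twdl K$ is a simply-connected CW complex of the same finite dimension as $K$, it lies in $\R(X)$ by the main case, so it suffices to show $\map_*(X, B\pi_1(K)) \sim *$. This is where the hypothesis that $\pi_1(X)$ has no nontrivial perfect quotients enters: every homomorphism $\pi_1(X) \to \pi_1(K)$ factors through the hypoabelianization of $\pi_1(X)$, and a careful analysis---including a phantom-map argument, since $X$ is only of finite type rather than finite-dimensional---should conclude that each component of $\map_*(X, B\pi_1(K))$ is contractible.

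The principal obstacle is Stage one. The a priori difficulty is that the tower used to resolve $K(\ZZ, n)$ involves the entire sequence of higher homotopy groups of spheres, so one must organize an induction in which every newly-encountered layer reduces to spheres (or lower Eilenberg--MacLane spaces) already known to lie in $\R(X)$. The arithmetic progression ``$n \equiv 1$ mod $k$'' in hypothesis (1) is calibrated exactly to let this induction close; once Stage one is secured, Stage two and the hypoabelian case should follow essentially formally from the closure properties of resolving classes.
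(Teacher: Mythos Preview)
Your proposal has genuine gaps at each stage.

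\textbf{Stage one does not close.} You want to deduce $K(\ZZ,n)\in\R(X)$ from $S^n\in\R(X)$ via the Postnikov map $S^n\to K(\ZZ,n)$. But $K(\ZZ,n)$ is the \emph{base} of the fibration $S^n\langle n\rangle\to S^n\to K(\ZZ,n)$, and resolving classes are closed under extensions by fibrations (fiber and base in $\R$ imply total space in $\R$), not under passage to bases. To realize $K(\ZZ,n)$ as a homotopy limit of spaces already in $\R$, or as the total space of a fibration with fiber and base in $\R$, you would need Eilenberg--MacLane spaces already in hand---the ``induction'' is circular. The sentence ``the induction closes because each new layer is assembled out of sphere-maps \ldots\ in the arithmetic progression already under control'' is precisely the unproved assertion; nothing in the hypothesis controls the tower of $k$-invariants of $S^n$.

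\textbf{Stage two does not match Stage one.} Even granting $K(A,m)\in\R(X)$ for \emph{large} $m$ in some progression, the Postnikov tower of a simply-connected finite-dimensional $K$ requires $K(\pi_m K,m)\in\R(X)$ for \emph{every} $m\geq 2$; there is no ``shift'' available. Moreover, $K$ is not assumed to be of finite type, so $\pi_m(K)$ need not be finitely generated, which your Stage one explicitly assumes.

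\textbf{The hypoabelian argument is incorrect.} It is simply false that $\map_*(X,B\pi_1(K))\sim *$ under the stated hypotheses: take $X=B\ZZ/p$ (which satisfies (1) and is hypoabelian) and any $K$ with $p\mid |\pi_1(K)|$; then $[X,B\pi_1(K)]=\Hom(\ZZ/p,\pi_1(K))\neq *$. So the universal-cover fibration cannot be used this way. The paper instead works one map $f:\s^t X\to K$ at a time, lifts $f$ to the cover $L\to K$ corresponding to $G=\im(\pi_1(f))\subseteq\pi_1(K)$, and argues: if $G=1$ the lift lands in a simply-connected finite-dimensional space; if $G\neq 1$ then $G$ (a quotient of $\pi_1(X)$) is not perfect, so there is a nontrivial map $L\to K(A,1)$ detecting the lift, and suspending contradicts the simply-connected case.

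\textbf{What the paper actually does.} The entire Postnikov/fibration strategy is replaced by a \emph{cofibration} strategy. The heart of the paper is a trio of ``duality-violating'' closure theorems for resolving classes: closure under finite-type wedges (via a tower built from iterated fibers of fold maps), a desuspension theorem (if $\s\A^\wdg\subseteq\R$ then $\A^\wdg\subseteq\R$, using Barratt's cubical Blakers--Massey trick), and closure under cofiber extensions by spaces in suitable collections. These give: spheres $\Rightarrow$ finite-type wedges of spheres $\Rightarrow$ finite complexes $\Rightarrow$ finite-type wedges of finite complexes, all by finite cone-length arguments. The passage to arbitrary (not finite-type) wedges of spheres---and hence to all finite-dimensional complexes---is a separate phantom-map argument: any $f:\s^t X\to\bigvee S^{n_i}$ is shown to be phantom (its restriction to each skeleton factors through a finite subwedge, already known to be in $\R$), and then rational triviality of $X$ kills the phantom set.
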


Our proof relies heavily on the theory of \textit{resolving classes}, 
introduced
in the paper \cite{MR2029919}.  After some preliminaries, 
we give a  streamlined 
 and updated account of the basic
theory of resolving classes, which we hope may be useful 
to other researchers.  Once the theory is in place, the proof 
of Theorem \ref{thm:main} is accomplished in two steps,
depending on whether or not $K$ is simply-connected.
In the final section, we briefly discuss some issues 
related to resolving classes and Theorem \ref{thm:main}.
For example, if $\s X \not\sim *$, 
the condition $\map_*(X, K)\sim *$ for all
simply-connected finite-dimensional complexes 
forces there to be  nontrivial maps from $X$
to certain infinite wedges of
simply-connected finite-dimensional complexes.
 We also offer  some
  some interesting problems and questions concerning
the `sphere codes' $\sigma(X) = \{ n \st \map_*(X, S^n) \sim *\}$.

For completeness, we include, in an appendix, a detailed outline
of the proof of the following
theorem, essentially due to Miller \cite{MR764593}.

\begin{thm}
\label{thm:blarg}
Let
 $X$ be a finite-type CW complex
  such that $\twdl H^*(X; \ZZ[{1\over p}]) = 0$.
If 
$H^*(X; \FF_p)$ is reduced and $H^*(X; \FF_p) \otimes J(n)$
is injective for all $n\geq 0$, 
then   $\map_*(X, S^{2n+1}) \sim *$ for all $n\geq 1$.
\end{thm}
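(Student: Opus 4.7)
\emph{Proof plan.} The approach follows Miller's original Massey--Peterson strategy: reduce to the $p$-complete statement via an arithmetic fracture argument, build a Massey--Peterson tower for the $p$-complete odd sphere, and verify that the algebraic hypotheses on $H^*(X;\FF_p)$ force all of the lifting obstructions to vanish.

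\textit{Arithmetic reduction.} The hypothesis $\twdl H^*(X;\ZZ[{1\over p}])=0$ implies, by extension of scalars, that $\twdl H^*(X;\QQ)=0$, and, by universal coefficients, that $\twdl H^*(X;\FF_q)=0$ for every prime $q\neq p$; a Mittag--Leffler argument promotes the latter to $\twdl H^*(X;\ZZ_q^\wedge)=0$.  Since $S^{2n+1}$ is simply-connected and of finite type, applying $\map_*(X,-)$ to its Sullivan arithmetic fracture square produces a homotopy pullback of mapping spaces.  The rational corner $\map_*(X, K(\QQ, 2n+1))$ is contractible because $\twdl H^*(X;\QQ)=0$, and each completed corner $\map_*\bigl(X, (S^{2n+1})_q^\wedge\bigr)$ for $q\neq p$ is contractible by a Postnikov-tower argument in the $q$-complete world.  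The problem therefore collapses to showing $\map_*\bigl(X, (S^{2n+1})_p^\wedge\bigr)\sim *$.

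\textit{Massey--Peterson tower and its analysis.} Since $H^*(S^{2n+1};\FF_p)$ is a very small unstable $\A$-module, it admits a minimal free resolution in $\U$ whose terms $V_s$ have strictly increasing connectivity, and the corresponding Massey--Peterson tower
\[
\cdots \to Y_2 \to Y_1 \to Y_0 = K(\ZZ_p^\wedge, 2n+1),\qquad \holim Y_s\simeq (S^{2n+1})_p^\wedge,
\]
realizes the $p$-complete odd sphere as a sequence of principal fibrations whose fibers are generalized Eilenberg--MacLane spaces $K(V_s, d_s)$.  Applying $\map_*(X,-)$ yields a tower of fibrations whose homotopy inverse limit computes $\map_*\bigl(X, (S^{2n+1})_p^\wedge\bigr)$, up to a $\limone$ term that vanishes because $X$ is of finite type.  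I would then show inductively that $\map_*(X, Y_s)\sim *$ for every $s$; the lifting obstructions and their indeterminacies live in groups of the shape $\Hom_\U(V_s,\, H^*(X;\FF_p)\otimes J(k))$ and $\Ext^1_\U(V_s,\, H^*(X;\FF_p)\otimes J(k))$, via the Brown--Gitler adjunction identifying $\pi_k \map_*(X, K(M,d))$ with maps from $M$ into $H^*(X;\FF_p)\otimes J(k)$ in an appropriate internal degree.

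\textit{Where the hypotheses enter, and the main obstacle.} The injectivity of $H^*(X;\FF_p)\otimes J(k)$ in $\U$, assumed for every $k\geq 0$, annihilates every $\Ext^1_\U$ obstruction; the reducedness of $H^*(X;\FF_p)$ is used, through an analysis of the Frobenius structure, to force the surviving $\Hom_\U$ terms to vanish.  The main obstacle is precisely this final $\Hom_\U$ vanishing: one must identify the $V_s$ with enough precision to verify that the combination of the ``odd sphere'' structure of the resolution and the reducedness of $H^*(X;\FF_p)$ really forces the relevant maps out of each $V_s$ to be zero.  This is the delicate algebraic point where Miller's original calculation in \cite{MR764593} does the hard work.
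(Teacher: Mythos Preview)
Your overall plan matches the paper's: reduce to the $p$-complete sphere, climb the Massey--Peterson tower for $S^{2n+1}$ (whose cohomology is $U(\s^{2n+1}\FF_p)$, so such a tower exists), and show all obstructions vanish. The paper's reduction step is slightly leaner---it quotes Miller's bijection $[Z,Y]\cong[Z,Y^\wedge_p]$ for $Z$ with $\twdl H^*(Z;\ZZ[{1\over p}])=0$ rather than running the fracture square---but that difference is cosmetic.

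Where your sketch goes astray is in the bookkeeping of the obstructions. There is no ``Brown--Gitler adjunction'' placing them in $\Hom_\U(V_s,\,H\otimes J(k))$ or $\Ext^1_\U(V_s,\,H\otimes J(k))$; the modules $J(k)$ enter only through the \emph{hypothesis} on $H=\twdl H^*(X;\FF_p)$, not as targets of the obstruction maps. With $H^*(S^{2n+1})=U(M)$ for the finite module $M=\s^{2n+1}\FF_p$ and a free resolution $P_*\to M$, the identification $[Z,K(\om^s P_s)]\cong\Hom_\U(P_s,\s^s\twdl H^*(Z))$ shows that the obstruction to promoting $f_{s-1}\simeq *$ to $f_s\simeq *$ is a class in $\Ext^s_\U(M,\s^s H)$---the $\Ext$ degree climbs with the tower, it is not pinned at~$1$. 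The paper dispatches all of these at once via the left adjoint $\overline\tau = (\?\! : H)_\U$ of $H\otimes -$: injectivity of every $H\otimes J(n)$ forces $\overline\tau$ to be exact; the computation $\overline\tau(F(n))\cong\bigoplus_{j>0}F(n-j)^{\oplus\dim H^j}$ (using $H^0=0$) shows $\overline\tau$ annihilates every finite module; hence
\[
\Ext^s_\U(M,\s^{s+t}H)\;\cong\;H^s\Hom_\U(P_*,\,H\otimes\s^{s+t}\FF_p)\;\cong\;H^s\Hom_\U(\overline\tau P_*,\,\s^{s+t}\FF_p)\;=\;\Ext^s_\U(0,-)\;=\;0.
\]
So the division of labor is not ``injectivity kills $\Ext^1$, reducedness kills $\Hom$''; rather, injectivity makes $\overline\tau$ exact, and the vanishing of $H^0$ makes $\overline\tau$ annihilate the finite module $M$. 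Once you reframe the obstructions this way, the ``delicate algebraic point'' you flag evaporates into a one-line adjunction argument, and there is no need to identify the individual $V_s$ at all.
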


Since  $B\ZZ/p$ satisfies the conditions of    
  Theorems  \ref{thm:main} and  \ref{thm:blarg}, together they
imply the Sullivan conjecture.

\medskip

\paragraph{\bf Acknowledgement}
Many thanks are due to John Harper for pointing out that I had the raw 
materials for a proof of the full Sullivan conjecture,  and for 
bringing the paper \cite{MR764593} to my attention.

\section{Preliminaries}

\subsection{Notation for Collections of Spaces}
\label{subsection:prelimcollections}
Since we will use collections of spaces throughout this paper, 
  it will be helpful to set up some basic notation for them.
Our constructions are homotopy-respecting, 
so we tacitly close all of our collections 
under weak homotopy equivalence.   

If $\A$ is a collection of spaces, then  an expression of a space 
$W$ as a \term{finite-type 
wedge} of spaces in $\A$
is a weak equivalence  $W\sim \bigvee_\I A_i$
where $A_i\in \A$ for each $i\in \I$ and  
 for each $n\in \NN$ only finitely many of the spaces 
 $A_i$ are \textit{not} $n$-connected.
 We say that $W$ is a finite-type wedge of spaces in $\A$ 
 if it has such an expression.
For a finite-type wedge $W$ of spaces in $\A$
  that is $(n-1)$-connected but not
 $n$-connected (which we denote $\conn(W) = n-1$), define 
 \[
 s(W) 
 =
 \min
\left\{ 
 k \
  \left| \
 \begin{array}{l}
 \mbox{$W$ has an expression    
 as a finite-type wedge of }
\\
 \mbox{ spaces in $\A$  with 
 all but $k$ summands   $n$-connected} 
 \end{array}
 \right\}
 \right.
 .
 \]
 We use the function $s$  to impose a partial order on the collection
 of finite-type wedges of spaces in $\A$:
we say that 
  $V < W$ if $\conn(W) < \conn(V)$ or if 
$\conn(V)  = \conn(W)$ and
$s(V) < s(W)$.    

For  collections $\A$ and $\mathcal{B}$, we write
 \begin{eqnarray*}
 \A\smsh \mathcal{B}  &=&\{  A\smsh B \st A\in A, B\in \mathcal{B}\}
 \\
 \s \A &=&  \{  \s A \st A\in \A\}
 \\
 \A^\wdg &=& \{ \mbox{all finite-type wedges of spaces in $\A$}\} 
 .
 \end{eqnarray*}
 Thus we say that $\A$ is closed under suspension 
 if $\s\A\sseq\A$, that $\A$ is closed under smash product
 if $\A\smsh\A \sseq \A$, and so on.  
 Note that $(\A^\wdg)^\wdg = \A^\wdg$.  If either $\A$ or 
 $\B$ is closed under suspension, then so is $\A\smsh\B$.

 


\subsection{Cone Length}

\label{subsection:ConeLength}

Let $\A$ be a collection of spaces.
An \term{$\A$-cone decomposition} of \term{length} $n$
for a map $f: X\to Y$ is a homotopy-commutative
diagram
\[
\xymatrix{
{\quad}\ar@{}[d]|{\mbox{$(\mathcal{D})$}}
&&A_0\ar[d]& A_1\ar[d] && A_{n-1}\ar[d]
\\
&&X_0 \ar[r]\ar[d]_\simeq 
& X_1 \ar[r] 
&  \cdots\ar[r] 
&  X_{n-1}\ar[r] 
& X_n\ar[d]^\simeq
\\
&& X\ar[rrrr]^-f &&&& Y
}
\]
in which     $A_k\in \A$ for all $k$ and 
each sequence $A_k \to X_k \to X_{k+1}$
is a cofiber sequence; 
if $f:X\to Y$ is a homotopy equivalence, 
then it has an $\A$-cone decomposition 
\[
\xymatrix{
{\quad}\ar@{}[d]|{\mbox{$(\mathcal{D})$}}
&\qquad & X \ar[ld]_{\id_X} \ar[rd]^f 
\\
&X\ar[rr]^-f && Y
}
\]
 of length zero. 
The 
\term{$\A$-cone length} of $f$ is
\[
 L_\A(f) = \inf\{ \mathrm{length}( \mathcal{D}) \st 
 \mbox{$\mathcal{D}$ is an $\A$-cone
decomposition of $f$} 
\}.
\]
(Thus $L_\A(f) =  \infty$ if $f$ has no $\A$-cone decomposition.)
The \term{$\A$-cone length} of a space $X$ is $\cl_\A(X) = L_\A(*\to X)$.

%

%
%

\subsection{Phantom Maps}

A \term{phantom map} is a pointed map $f:X\to Y$ from a CW complex
$X$ such that the restriction $f|_{X_n}$ of $f$ to the $n$-skeleton
is trivial for each $n$.    
We write $\Ph(X, Y) \sseq [X, Y]$ for the set of 
pointed homotopy classes
of phantom maps from $X$ to $Y$.
See \cite{MR1361910} for an excellent survey on phantom maps.

If $X$ is the homotopy colimit of a telescope diagram
$\cdots \to X_{(n)}\to X_{(n+1)}\to \cdots$, then 
there is a short exact sequence
of pointed sets 
\[
* \to \limone [\s X_{(n)}, Y] 
\longrightarrow
 [X, Y] 
 \longrightarrow
  \lim [ X_{(n)}, Y]\to *, 
\]
and dually, if $Y$ is the homotopy limit of a tower
$\cdots \gets Y_{(n)}\gets Y_{(n+1)}\gets \cdots$, then there
is a short exact sequence
\[
* \to \limone [ X, \om Y_{(n)}] 
\longrightarrow
 [X, Y] 
\longrightarrow
 \lim [ X, Y_{(n)}]\to * .
\]
In the particular case of the expression of a CW complex $X$ as
the homotopy colimit of its skeleta or of a space $Y$ as the
homotopy limit of its Postnikov system, the kernels are the
phantom sets.

%
%
%


We will be interested in showing that certain phantom sets 
$\Ph(X,Y)$ are trivial.
One useful criterion is that if $G$ is a tower of compact Hausdorff
topological groups and continuous homomorphisms, then $\limone G = *$ 
(see \cite[Prop. 4.3]{MR1361910}).  This is used to prove
the following lemma.

\begin{lem}
\label{lem:CompactLimOneVanish}
Let $\cdots \gets Y_{(s)}    \gets Y_{(s+1)} \gets \cdots$ be a tower 
of spaces such that
each homotopy group $\pi_k(Y_{(s)}   )$ is finite.
If $Z$ is of finite type, then 
$
\limone [ Z, \om Y_{(s)}    ] = *
$.
\end{lem}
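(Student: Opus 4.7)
The plan is to equip each set $[Z, \om Y_{(s)}]$ with the structure of a profinite, hence compact Hausdorff, topological group in such a way that the bonding maps in the $s$-tower become continuous homomorphisms; the criterion cited just before the lemma then forces $\limone [Z, \om Y_{(s)}] = *$.

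First I would establish the following standard finiteness input: for any finite CW complex $F$ and any space $W$ all of whose homotopy groups are finite, the set $[F, W]$ is finite. This is obstruction theory applied to the Postnikov tower of $W$ --- only finitely many stages are relevant because $F$ is finite-dimensional, and at each stage the set of extensions is parametrized by a cohomology group of $F$ with finite coefficients, itself finite because $F$ has finitely many cells. Since $Z$ is of finite type, each skeleton $Z_n$ is a finite CW complex, and each $\pi_k(\om Y_{(s)}) = \pi_{k+1}(Y_{(s)})$ is finite by hypothesis, so the sets $[Z_n, \om Y_{(s)}]$ and $[\s Z_n, \om Y_{(s)}]$ are finite for all $n$ and $s$.

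Next, I would apply the Milnor short exact sequence displayed above to the expression of $Z$ as the homotopy colimit of its skeleta $Z_n$:
\[
* \to \limone [\s Z_n, \om Y_{(s)}] \to [Z, \om Y_{(s)}] \to \lim [Z_n, \om Y_{(s)}] \to *.
\]
The tower $\{[\s Z_n, \om Y_{(s)}]\}_n$ consists of finite groups, hence is trivially Mittag--Leffler, so its $\limone$ vanishes. Thus $[Z, \om Y_{(s)}]$ is identified with $\lim_n [Z_n, \om Y_{(s)}]$, an inverse limit of finite discrete groups, and in its natural inverse-limit topology it is a profinite, hence compact Hausdorff, topological group.

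Finally, the structure maps $Y_{(s+1)} \to Y_{(s)}$ induce homomorphisms $[Z_n, \om Y_{(s+1)}] \to [Z_n, \om Y_{(s)}]$ for every $n$, compatible with skeletal restriction, so passing to inverse limits yields continuous group homomorphisms $[Z, \om Y_{(s+1)}] \to [Z, \om Y_{(s)}]$. All the hypotheses of the $\limone$ vanishing criterion from \cite[Prop.~4.3]{MR1361910} are now met, and the lemma follows. The only genuine technical input is the finiteness claim in the first step; everything after that is formal assembly.
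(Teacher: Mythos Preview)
Your proof is correct and follows essentially the same route as the paper's: both arguments show that the sets $[Z_n,\om Y_{(s)}]$ (and their loop analogues) are finite, use the Milnor sequence to identify $[Z,\om Y_{(s)}]$ with the profinite group $\lim_n[Z_n,\om Y_{(s)}]$, observe that the $s$-structure maps are continuous because they arise from maps of the defining towers, and then invoke the compact Hausdorff criterion. The only cosmetic difference is that you kill the first $\limone$ via Mittag--Leffler while the paper cites the compact Hausdorff criterion already at that step; and you spell out the obstruction-theory reason for finiteness, which the paper simply asserts.
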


\begin{proof}
The homotopy sets $[Z_n, \om^j Y_{(s)}   ]$ are  finite, and 
we give them the discrete topology, resulting in  towers of 
compact groups and continuous homomorphisms.
Fixing $s$ and letting $n$ vary, we find that
 ${\lim}_n^1 [ Z_n , \om^2 Y_{(s)}   ] = *$, 
 and hence the exact sequence
 \[
0 \to   {\lim}^1_n [ Z_n, \om^2 Y_{(s)}] 
\longrightarrow
 [Z, \om Y_{(s)}   ] 
\longrightarrow
{ \lim}_n [ Z_n , \om Y_{(s)}   ]\to 1 
\]
(of groups)
reduces to an isomorphism 
$[Z , \om Y_{(s)}   ] \cong 
{ \lim}_n [ Z_n , \om Y_{(s)}   ]$.
Since $[Z , \om Y_{(s)}   ]$
 is an inverse limit of finite discrete spaces,  
it is compact and Hausdorff; and since the structure maps 
$Y_{(s)}    \to Y_{(s-1)}$ induce maps of the towers that define the topology, 
    the induced maps
$ [Z , \om Y_{(s)}   ]\to [Z , \om Y_{(s-1)}]$ are continuous.
Thus
 ${\lim}_s^1 [Z , \om Y_{(s)}   ] = *$.
\end{proof}

The Mittag-Leffler condition is another
useful criterion for the vanishing of $\limone$.
 A
tower of groups $\cdots \gets G_n \gets G_{n+1}\gets \cdots$
is \term{Mittag-Leffler} if for each $n$ 
the images $\im(G_{n+k}\to G_n)$ stabilize for 
large $k$.  That is, if
there is a function $\kappa : \NN\to \NN$
such that 
 $
 \im( G_{n+k} \to G_n) = \im( G_{n+\kappa(n)} \to G_n)\sseq G_n
 $
whenever $k \geq \kappa(n)$.
 
\begin{prop}
Let $\cdots \gets G_n \gets G_{n+1}\gets \cdots$ be a tower of groups.\begin{problist}
\item 
If the tower  is Mittag-Leffler, then $\limone  G_n = *$.
\item
If each $G_n$ is a countable group, then  the converse
holds:  if $\limone G_n = *$, then the tower is Mittag-Leffler 
\cite[Thm. 4.4]{MR1361910}.
\end{problist}
\end{prop}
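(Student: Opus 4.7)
Part (b) is attributed to \cite[Thm.~4.4]{MR1361910}, so the plan concerns only part (a). Recall that for a tower of groups $\cdots \gets G_n \gets G_{n+1}\gets \cdots$ with structure maps $p_n$, the pointed set $\limone G_n$ may be described as the orbit set of the action of $\prod G_n$ on itself given by $(a_n)\cdot(x_n) = (a_n\,x_n\,p_n(a_{n+1})^{-1})$; the class of a cocycle $(x_n)$ is trivial precisely when the system $p_n(a_{n+1}) = a_n x_n$ admits a simultaneous solution $(a_n)\in\prod G_n$.

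The plan is a two-step reduction. First I would dispatch the special case in which every $p_n$ is surjective: starting from $a_1 = e$ and repeatedly selecting any preimage $a_{n+1}\in p_n^{-1}(a_n x_n)$ produces the required solution, so such a tower automatically has $\limone = *$. Second, given a general Mittag-Leffler tower $\{G_n\}$, I would pass to the subtower of stable images $H_n = \bigcap_k \im(G_{n+k}\to G_n)$; by the ML hypothesis $H_n = \im(G_{n+\kappa(n)}\to G_n)$, and a short computation shows that for $N$ sufficiently large $p_n$ carries $\im(G_{n+1+N}\to G_{n+1})$ onto $\im(G_{n+1+N}\to G_n) = H_n$, so the restricted map $p_n\colon H_{n+1}\to H_n$ is surjective. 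The first step then yields $\limone H_n = *$.

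The main obstacle is bridging $\limone H_n$ and $\limone G_n$, that is, showing that every orbit of $\prod G_n$ acting on $\prod G_n$ meets $\prod H_n$. Given a cocycle $(x_n)\in\prod G_n$, I would inductively construct a coboundary $(a_n)$ together with compatible lifts $\tilde a_n \in G_{n+\kappa(n)}$ of $a_n$ along the tower, so that the modified cocycle $y_n := a_n x_n p_n(a_{n+1})^{-1}$ lands in $H_n$ for every $n$. Mittag-Leffler is exactly what is needed to propagate the induction: at each stage the element $a_n x_n$ lies in $\im(p_n)$ up to a correction in $H_n$, and the ML condition guarantees that such a correction can be absorbed into a lift coming from deep enough in the tower. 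Once $(x_n)$ has been translated into $\prod H_n$ in this way, the surjective-tower argument finishes the proof. I expect this cocycle-modification step to be the most delicate, since the non-normality of $H_n$ in $G_n$ prevents a clean appeal to a long exact sequence through a coset tower $\{G_n/H_n\}$, and the bookkeeping has to be done directly at the level of orbit sets.
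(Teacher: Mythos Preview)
The paper does not actually supply a proof for this proposition. It is stated as background material: part (b) carries an explicit citation to McGibbon's survey, and part (a) is simply asserted as a well-known fact about towers of groups, with no argument given in the paper itself. So there is no ``paper's own proof'' to compare against.

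That said, your outline for (a) is the standard one and is correct. The decomposition into (i) surjective towers have $\limone = *$ by an obvious inductive lifting, (ii) the stable-image subtower $\{H_n\}$ is surjective under the Mittag--Leffler hypothesis, and (iii) every $\prod G_n$-orbit meets $\prod H_n$, is exactly how this is usually done (see, e.g., Bousfield--Kan or Weibel in the abelian case, and the same idea adapted to orbit sets in the nonabelian case). Your identification of step (iii) as the delicate point is accurate, and your remark that non-normality of $H_n$ obstructs a clean six-term sequence through $G_n/H_n$ is the right diagnosis. In the abelian case the bridging step collapses to the observation that the quotient tower $\{G_n/H_n\}$ is pro-zero (every long enough composite vanishes), whence $\limone(G_n/H_n)=0$; in the nonabelian case one does the cocycle modification by hand, essentially as you sketch. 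If you want to tighten the exposition, one clean way to organise (iii) is to build $a_n$ and the deep lift $\tilde a_n\in G_{n+\kappa(n)}$ simultaneously by induction, using at each stage that $\im(G_{n+N}\to G_n)=H_n$ for all $N\geq\kappa(n)$ to realign the target of the next lift.
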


Importantly, the Mittag-Leffler condition does not refer to the
algebraic structure of the groups $G_n$.
This observation plays a key role in 
 the following   result (cf. \cite[\S 3]{MR1357793}).

\begin{prop}
\label{prop:SameLoop}
Let  $X$ be a CW complex of finite type, and let
$Y_1$ and $Y_2$ be countable CW complexes 
with $\om Y_1 \simeq \om Y_2$.
Then 
$
\Ph( X, Y_1)  = * 
$
if and only if
$
\Ph(X, Y_2) = *
$.
\end{prop}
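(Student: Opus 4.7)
The idea is to apply the preceding Postnikov-tower sequence to each $Y_i$, identify the two resulting towers via the loop-space equivalence, and then invoke the Mittag-Leffler criterion from the proposition above to turn that identification into an equivalence of vanishing statements.

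First, the Postnikov system $\cdots\gets (Y_i)_{(s)} \gets (Y_i)_{(s+1)}\gets\cdots$ for $Y_i$ gives
\[
\Ph(X, Y_i) = \limone [X, \om (Y_i)_{(s)}].
\]
The key observation is that $\om (Y_i)_{(s)}$ is weakly equivalent to the $(s-1)$st Postnikov section $(\om Y_i)_{(s-1)}$, and the natural comparison maps assemble into a weak equivalence of towers between $\{\om (Y_i)_{(s)}\}$ and the (shifted) Postnikov tower of $\om Y_i$ itself. Since the Postnikov tower is a homotopy invariant of its inverse limit, the hypothesis $\om Y_1\simeq \om Y_2$ yields a zigzag of tower equivalences $\{\om (Y_1)_{(s)}\}\simeq \{\om (Y_2)_{(s)}\}$; applying $[X,-]$ produces an isomorphism of towers of groups
\[
\bigl\{[X, \om (Y_1)_{(s)}]\bigr\} \;\cong\; \bigl\{[X, \om (Y_2)_{(s)}]\bigr\}.
\]

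Because $X$ is of finite type and each $Y_i$ is countable, every group in these towers is countable. The preceding proposition therefore applies: $\Ph(X, Y_i) = \limone [X, \om (Y_i)_{(s)}]$ vanishes if and only if the tower $\{[X, \om (Y_i)_{(s)}]\}$ is Mittag-Leffler. The Mittag-Leffler condition refers only to the images of the structure maps and makes no appeal to any algebraic structure, so it is preserved by any isomorphism of towers. Hence the two towers above are Mittag-Leffler simultaneously, and $\Ph(X, Y_1)$ and $\Ph(X, Y_2)$ vanish simultaneously.

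The main obstacle is verifying the second step, namely that the loop-space equivalence $\om Y_1\simeq \om Y_2$ can be promoted to a \emph{compatible} equivalence between the whole Postnikov towers, rather than just a collection of pairwise equivalences of individual sections. This is handled by a functorial Postnikov construction, or by an inductive obstruction argument based on the fact that $Z_{(n)}$ is characterized (up to weak equivalence) by $Z$ together with a natural map $Z\to Z_{(n)}$ killing homotopy above degree $n$. The countability hypothesis plays no role until the very last step, where it unlocks the Mittag-Leffler criterion.
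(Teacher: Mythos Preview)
Your argument is correct, but it takes a different (and somewhat more laborious) route than the paper's.  The paper filters on the \emph{domain} side: since $\Ph(X,Y_i)=\limone[\Sigma X_n,Y_i]$ for the skeletal filtration of $X$, the single map $\om Y_1\simeq\om Y_2$ immediately gives a levelwise bijection of towers
\[
\{[\Sigma X_n,Y_1]\}\cong\{[X_n,\om Y_1]\}\cong\{[X_n,\om Y_2]\}\cong\{[\Sigma X_n,Y_2]\},
\]
compatible with the structure maps because those come from $X_{n-1}\hookrightarrow X_n$.  This sidesteps entirely the ``main obstacle'' you identify: no functorial Postnikov construction, no compatibility argument, no identification $\om((Y_i)_{(s)})\simeq(\om Y_i)_{(s-1)}$ is needed.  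Countability is also immediate in the paper's version, since each $\Sigma X_n$ is a finite complex and each $Y_i$ is countable; in your approach one must argue separately that $[X,\om(Y_i)_{(s)}]$ is countable for an infinite-dimensional finite-type $X$ (true, via obstruction theory and the finite generation of $H_*(X)$, but not as automatic as you suggest).  Both proofs ultimately hinge on the same observation---that the Mittag-Leffler condition is insensitive to the group structure---so the conceptual content is the same; the paper simply chooses the filtration that makes the tower identification trivial.
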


\begin{proof}
The homotopy equivalence
 $\om Y_1\simeq \om Y_2$ gives 
 levelwise bijections
$
\{ [\s X_n, Y_1]\}
\cong
\{ [ X_n, \om Y_1]\}
\cong 
\{ [ X_n,\om  Y_2]\}
\cong
\{ [\s X_n, Y_2]\}
$
of towers of sets. 
Since $X$ is of finite type and $Y_1, Y_2$ are countable
CW complexes, these towers are towers of countable 
groups.
Now the  triviality of $\Ph(X,Y_1)$ implies that 
  the first tower is Mittag-Leffler; but then all four towers
 must be   Mittag-Leffler, 
and the result follows.
\end{proof} 

We end our account of phantom maps with a criterion for the vanishing 
of phantom maps into countable wedges of spheres.

\begin{prop}
\label{prop:BLA}
If $Z$ is rationally trivial\footnote{i.e., $\twdl H^*(Z;\QQ) = 0$}
 and of finite type, 
then 
\[
\Ph \left(\mbox{$Z, \bigvee_{i=1}^\infty S^{n_i} $} \right) = *  .
\]
\end{prop}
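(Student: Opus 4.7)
The plan is to identify $\Ph(Z, Y) = \limone_n [Z_n, \om Y]$ via the Milnor exact sequence arising from the skeletal filtration $Z = \hocolim Z_n$ (where $Y = \bigvee_{i=1}^\infty S^{n_i}$), and then show that the resulting tower is Mittag-Leffler. Since $Z$ is of finite type, each $Z_n$ is a finite CW complex and hence each $[Z_n, \om Y]$ is a countable group; by the countable-group Mittag-Leffler criterion cited in the preliminaries, the vanishing of $\limone$ reduces to verifying Mittag-Leffler.

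Next, I would exploit the compactness of $\s Z_n$: every map $\s Z_n \to Y$ factors through a finite subwedge $Y_{(k)} = \bigvee_{i=1}^k S^{n_i}$, giving $[Z_n, \om Y] = \colim_k [Z_n, \om Y_{(k)}]$. Rational triviality is essential here: since $\twdl H^*(Z; \QQ) = 0$ and the rationalization of each $\om S^m$ is built from finitely many Eilenberg--MacLane spaces, one deduces that $[Z_n, \om S^m]$ is torsion (hence finite, being finitely generated) once $n$ is large relative to $m$. Combining this with the Hilton--Milnor decomposition of $\om Y_{(k)}$ as a weak product of $\om S^{d_\alpha}$'s (over Hilton basis elements) shows that each tower $\{[Z_n, \om Y_{(k)}]\}_n$ is Mittag-Leffler, i.e.\ $\Ph(Z, Y_{(k)}) = *$ for every finite subwedge.

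The main obstacle is promoting this per-$k$ Mittag-Leffler property to the colimit tower $\{[Z_n, \om Y]\}_n$, since $\limone$ and filtered $\colim$ do not commute in general. The approach I would take is to verify Mittag-Leffler directly by decomposing
\[
[Z_n, \om Y] \cong \bigoplus_\alpha [Z_n, \om S^{d_\alpha}]
\]
as a countable direct sum of finite groups (via Hilton plus compactness of $Z_n$), and using a connectivity/dimension argument to bound uniformly the per-summand stabilization thresholds at each fixed level $n$: only summands with $d_\alpha$ not too large relative to $n$ can contribute nontrivially, and their ML thresholds should admit a common bound. If full uniform control proves elusive, a fallback is to apply \propref{prop:SameLoop} to replace $Y$ by a more tractable countable CW complex with the same loop space---ideally one whose Postnikov tower consists of spaces with finite homotopy groups---so that \lemref{lem:CompactLimOneVanish} applies.
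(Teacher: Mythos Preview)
Your main line of argument is correct and is essentially the paper's proof: decompose $[\s Z_n, Y]$ as a (weak) direct sum $\bigoplus_\alpha [\s Z_n, S^{d_\alpha}]$ via Hilton--Milnor, use rational triviality of $Z$ to see each tower $\{[\s Z_n, S^m]\}_n$ is Mittag--Leffler, and then observe that at level $n$ only the finitely many values $m\leq n+1$ contribute, so the per-sphere ML indices admit a common bound $\kappa(n)$.

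The one organizational difference is that the paper invokes \propref{prop:SameLoop} at the \emph{outset}, not as a fallback: since $\om\bigl(\bigvee S^{n_i}\bigr)\simeq \om P$ for the weak product $P=\prod_\alpha S^{m_\alpha}$, it suffices to show $\Ph(Z,P)=*$, and then compactness of $\s Z_n$ gives $[\s Z_n,P]=\prod_\alpha[\s Z_n,S^{m_\alpha}]$ immediately, bypassing the colimit-over-finite-subwedges step you describe. Your fallback suggestion of finding a target with finite homotopy groups so that \lemref{lem:CompactLimOneVanish} applies would not work here (neither $Y$ nor $P$ has finite homotopy groups), but it is not needed: the correct use of \propref{prop:SameLoop} is simply to pass to $P$.
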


\begin{proof}
 The Hilton-Milnor
 theorem implies that there is a \textit{weak} product 
 of spheres 
 $P = \prod_\alpha S^{m_\alpha}$
 such that 
 $
 \om \left(  \bigvee_1^\infty S^{n_i}  \right) \simeq \om P
 $
 (that is, $P$ is the (homotopy) colimit of the diagram of finite subproducts
 of the categorical product).
 By Proposition \ref{prop:SameLoop}, 
  it suffices to show that $\Ph(Z, P) = *$.  
 
Since the skeleta of $Z$ are compact, 
every map $\s Z_k \to P$ factors 
 through a 
finite subproduct of $P$, so 
  $[\s Z_k , P]$ is a weak product 
 $\prod_\alpha [\s Z_k, S^{m_\alpha}]$. 
Because $Z$ is rationally trivial and of finite type, 
we have $\Ph(Z, S^m) \cong 
\limone  [ \s Z_k, S^{m}] = *$ for each $m$ \cite{MR1361910}.
These are towers of countable groups, so they must all be
Mittag-Leffler.  
Write $\lambda (n,m)$ for the first $k$ for which the images
 \[
\im\left( 
 [(\s Z)_{n+k}, S^m]
\to   [(\s Z)_{n}, S^m]
   \right)
 \]
stabilize.  Since  $\lambda( n,m) = 0$
 for $m > n+1$,    the set $\{ \lambda (n,m)\st m\geq 0\}$ 
 is finite, and we define $\kappa(n)$ to be its maximum.
%
%
Now it is clear that the images 
\[
\im\left( 
\mbox{$
 \prod_\alpha [\s Z_{n+k} , S^{m_\alpha}]
 \to
  \prod_\alpha [\s Z_n, S^{m_\alpha}]
  $}
  \right)
  \]
  are independent of $k$ for $k \geq \kappa(n)$.
  Thus the tower $\{   \prod_\alpha [\s Z_k, S^{n_\alpha}]\}$
  is Mittag-Leffler,  $\Ph( Z, P) = *$,  and  the proof is complete.
  \end{proof}

\section{Resolving Classes}

We are interested in the condition $\map_*(X,Y)\sim *$.
Since this can happen only for path-connected $X$,   
we tacitly assume that $X$ is path-connected;  thus
$\map_*(X,Y) = \map_*(X, Y_\star)$, where $Y_\star$ is the
basepoint component of $Y$, so we may assume that $Y$
is path-connected too, if we like.

\subsection{Basic Definitions}

Let $\mathfrak{X}$ and $\mathfrak{Y}$ both denote the 
classes of all collections of spaces (we intend to use 
 $\mathfrak{X}$
for domains and $\mathfrak{Y}$ for targets).  We define
functions
\[
\Phi: \mathfrak{X} 
\longrightarrow
 \mathfrak {Y}
\mmathand 
\Theta: \mathfrak{Y}
\longrightarrow
 \mathfrak{X}
\]
by the rules 
\begin{eqnarray*}
\Phi(\mathcal{X}) 
&=&
 \{ Y \st \map_*(X, Y) \sim *\ \mbox{for all}\ X\in \mathcal{X}\}
\\
\Theta(\mathcal{Y}) 
&=&
\{ X \st \map_*(X, Y) \sim *\ \mbox{for all}\ Y\in \mathcal{Y}\}.
\end{eqnarray*}
The maps $\Phi$ and $\Theta$ are a \textit{Galois connection} between 
$\mathfrak{X}$ and $\mathfrak{Y}$, and hence establish 
a bijection between $\im(\Theta)$ and $\im(\Phi)$.  

A collection   $\C \in \im(\Theta)$ is a \textit{strongly closed class};
in particular, $\Theta\of \Phi( \{ X\} )$ is the Bousfield class
$\langle X\rangle$
(studied by A. K. Bousfield, E. Dror Farjoun, W. Chach\'olski, 
and others).  
The collections   $\R\in \im(\Phi)$ have received less attention; 
we call them \term{resolving kernels}.\footnote{Thus
$\Phi(\{X\} ) = \im ( P_X)$ and 
$\Theta\of \Phi( \{ X\} ) = \ker(P_X)$, where $P_X$ denote the 
\term{$X$-nullification} functor (see \cite{MR1392221,MR1408539}).}

It follows formally from the definition
 that a strongly closed class is closed under 
 weak equivalence, 
pointed homotopy colimits and extensions by cofibrations; 
dually, resolving kernels are closed under weak equivalence, 
pointed homotopy limits
and extensions by fibrations.

We call a class $\R\in \mathfrak{Y}$ (which we assume to be 
closed under weak equivalence)
a \term{resolving class} if it is closed under
pointed homotopy limits, and a \term{strong resolving class}
if it is also closed under extensions by fibrations.


 \subsection{Closure Properties for Resolving Classes}
 
The power of the theory of 
closed classes  is founded on a few duality-violating
theorems, such as the Zabrodsky lemma
and 
E. Dror Farjoun's theorem 
relating the fiber of an induced map of homotopy colimits
to the `pointwise fibers'.
Similarly,   resolving classes are useful 
tools by virtue of three formally implausible closure properties.
These properties are best expressed in terms of collections of spaces
rather than one space at a time;
throughout this section we write
 $\A$ and $\B$ to denote collections of simply-connected spaces.
 
 The first result concerns the closure of resolving kernels 
 under the formation of wedges.  
 This is a minor extension
 of     \cite[Prop. 7]{MR2029919} 
 using an argument due
 to W. Dwyer.
  
 \begin{thm}
 \label{thm:1}
 Let $\R$ be a resolving kernel.  
 If $\A\smsh \A\sseq \A$  and $\s \A\sseq\R$, 
 then $\s \A^\wdg\sseq \R$.
 \end{thm}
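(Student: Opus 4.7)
The plan is Noetherian induction on the partial order $<$ among finite-type wedges of $\s\A$-spaces, with the inductive step provided by the Ganea fibration combined with the James splitting.

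Given $W = \bigvee_{i\in I} \s A_i \in \s\A^\wdg$ of connectivity $n-1$, I would choose an index $i_0$ with $\conn(\s A_{i_0}) = n-1$ and write $W = \s A_{i_0}\vee W'$ where $W' = \bigvee_{i \ne i_0}\s A_i$. Either $s(W) = 1$, in which case $\conn(W') > n-1$, or $s(W') = s(W)-1$; in both cases $W' < W$, so by the induction hypothesis $W' \in \R$. The Ganea fibration, valid because both $\s A_{i_0}$ and $W'$ are simply connected, takes the form
\[
    \om \s A_{i_0} * \om W' \longrightarrow W \longrightarrow \s A_{i_0}\times W',
\]
and its base $\s A_{i_0}\times W' \simeq \holim(\s A_{i_0}\to *\gets W')$ lies in $\R$ by limit-closure applied to $\s A_{i_0} \in \s\A \sseq \R$ and $W'\in\R$.

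For the fiber I would use $X * Y \simeq \s(X\smsh Y)$ together with the James splitting $\s\om\s Z \simeq \bigvee_{k\ge 1}\s Z^{\smsh k}$ (both valid since every space in sight is connected). Writing $W' = \s\tilde W'$ with $\tilde W' = \bigvee_{i\ne i_0} A_i$, two applications of the splitting give
\[
    \s(\om \s A_{i_0} \smsh \om\s\tilde W') \simeq \bigvee_{n,m\ge 1}\s\bigl(A_{i_0}^{\smsh n}\smsh (\tilde W')^{\smsh m}\bigr).
\]
Distributing the smash over the wedges inside $(\tilde W')^{\smsh m}$ and invoking $\A\smsh\A \sseq \A$ presents the fiber as a finite-type wedge of spaces of the form $\s C$ with $C\in\A$, i.e., as an element of $\s\A^\wdg$. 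Every summand here involves a smash of at least two of the $A_i$'s, so has connectivity at least $2\conn(W)$, which strictly exceeds $\conn(W)$ since $\conn(W)\ge 2$. Hence the fiber is $<W$ in the order, the induction hypothesis puts it in $\R$, and the closure of $\R$ under extensions by fibrations delivers $W\in\R$.

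The main delicate point is that the order $<$ admits infinite descending chains of unboundedly increasing connectivity, so Noetherian induction is not automatic. I would resolve this by reorganizing the argument as a Postnikov-style recursion: the iterated passage to fibers doubles connectivity, so each Postnikov section $P_n W$ is determined by only finitely many stages of the resulting tower, and each such stage arises as an extension by a fibration of objects already seen to be in $\R$; then $W = \holim_n P_n W$ lies in $\R$ by one final appeal to the limit-closure of resolving kernels.
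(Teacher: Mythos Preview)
Your argument has a genuine gap at exactly the point you flag. The Ganea fibration has base $\Sigma A_{i_0}\times W'$, and placing this in $\R$ requires $W'\in\R$; that in turn requires $W''\in\R$, and so on, while each stage also spawns a new fiber that must be handled. The partial order $<$ is not well-founded, so no transfinite induction is available. Your proposed Postnikov rescue does not close the gap: even if you build a tower $W\leftarrow F_1\leftarrow F_2\leftarrow\cdots$ of iterated Ganea fibers with connectivity tending to infinity, the base $B_k=\Sigma A_k\times F_k'$ at stage $k$ still contains the wedge $F_k'\in\Sigma\A^\vee$, and you have given no independent argument that $F_k'\in\R$. So the phrase ``objects already seen to be in $\R$'' is not justified---the bases are never reduced to things covered by the bare hypothesis $\Sigma\A\subseteq\R$.

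The paper sidesteps this circularity by choosing a different fibration. Write $W=\Sigma A\vee\Sigma B$ with $B\in\A$ a \emph{single} summand of minimal connectivity and $\Sigma A$ the remaining wedge, and let $W_1$ be the homotopy fiber of the pinch map $W\to\Sigma B$. One has
\[
W_1\ \simeq\ \Sigma A\rtimes\Omega\Sigma B\ \simeq\ \Sigma A\wedge\Bigl(\textstyle\bigvee_{k\ge0}B^{\wedge k}\Bigr)\ \in\ \Sigma\A^\vee,
\]
and $W_1<W$. Iterating produces a tower $W\leftarrow W_1\leftarrow W_2\leftarrow\cdots$ with $\holim_nW_n\sim*$. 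The crucial difference from your setup is that the base at every stage is a \emph{single} suspension $\Sigma B_n\in\Sigma\A\subseteq\R$, known by hypothesis with no induction needed. Now the resolving-kernel structure finishes the proof directly: writing $\R=\Phi(\mathcal X)$ and fixing $X\in\mathcal X$, each fiber sequence $W_{n+1}\to W_n\to\Sigma B_n$ gives a weak equivalence $\map_*(X,W_{n+1})\to\map_*(X,W_n)$, whence
\[
\map_*(X,W)\ \sim\ \holim_n\map_*(X,W_n)\ \sim\ \map_*(X,\holim_nW_n)\ \sim\ *.
\]
No intermediate space $W_n$ is ever shown to lie in $\R$ before the conclusion; the argument is a single tower computation rather than an induction over $<$. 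If you want to salvage your Ganea approach, you would need to replace the product base by something already in $\R$---which is exactly what pinching to one summand accomplishes.
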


\begin{proof}
If  $W\in \s\A^\wdg$ then we can write 
 $W = \s A \wdg \s B$, where  
 $B\in\A $  and $\s A < W$ (in the partial order defined
 in Section \ref{subsection:prelimcollections}).
The homotopy fiber $W_1$ of the quotient map 
$\s A \wdg \s B \to \s B$
 is easily seen  to be homotopy equivalent 
to 
\[
\mbox{$W_1\simeq
\s A \rtimes \om \s B = 
\s A \smsh ( \om \s B)_+
\simeq 
\s A \smsh 
\left( \bigvee_{k = 0}^\infty B^{\smsh k} \right)$}, 
\]
where the $0$-fold smash product $B^{\smsh 0}$ is $S^0$;
see \cite{MR0281202} for a proof (or \cite{MR2029919}).
Since $\A$ is closed under smash products,
it follows that 
$W_1\in \s\A^\wdg$, and the displayed 
wedge decomposition  demonstrates that
  $W_1 < W$.    Repeating  this process yields a  tower
\[
W \longleftarrow    W_1 \longleftarrow    W_2\gets    \cdots 
\gets
    W_n\longleftarrow    W_{n+1}\gets    \cdots
\]
in which   $W_n\in \s\A^\wdg$ and  $W_{n+1} < W_n$
for each $n$.
It follows that the connectivity of $W_n$ increases
without bound and so $\holim W_n \sim *$.  

To complete the proof, we use the fact that $\R = \Phi( \mathcal{X})$ is a resolving kernel.  Let $X\in \mathcal{X}$ and observe that 
since $\map_*(X, \s B) \sim *$ for all $B\in \A$, 
the induced maps
\[
\map_*(X, W_{n+1})
\longrightarrow \map_*(X, W_{n})
\]
are weak equivalences for all $n$; hence
\[
\map_*(X, W) \sim
\holim_n \map_*( X, W_{n})  
\sim\map_*(X,  \holim_n W_{n})\sim * , 
\]
so $W\in \R$.
\end{proof}

Next we investigate suspension in resolving classes.
 
 \begin{thm}
 \label{thm:2}
 Let $\R$ be a resolving class. 
If  $\s\A^\wdg \sseq \R$ then $\A^\wdg\sseq  \R$.
 \end{thm}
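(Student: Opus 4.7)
The hypothesis $\s\A^\wdg \sseq \R$ gives $\s V \in \R$ for every $V \in \A^\wdg$, and hence $\om \s V \in \R$ as well, since $\om \s V$ is the homotopy pullback of $\ast \to \s V \gets \ast$ and $\R$ is closed under homotopy limits. The plan is to realize any $W \in \A^\wdg$ as a homotopy inverse limit of spaces of the form $\om \s (-)$ applied to pieces of $W$, so that closure under homotopy limits closes the argument.

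I would follow the scheme of the proof of Theorem~\ref{thm:1}, inducting on the partial order $<$ from Section~\ref{subsection:prelimcollections}. Given $W \in \A^\wdg$, write $W = A \vee V$ with $A \in \A$ and $V < W$, so that $V$ is in $\R$ by strong induction. The James unit $\iota\colon W \to \om \s W$ is highly connected---explicitly, $(2c)$-connected whenever $W$ is $c$-connected---so its homotopy fiber is strictly more connected than $W$ itself. Combining $\iota$ with the inductive data on $V$ should produce a comparison map $W \to W_1$ into a space built from $\om \s(-)$ of wedges in $\A^\wdg$, whose homotopy fiber $W^{(1)}$ is either more connected than $W$ or smaller in $s$. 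Iterating yields a tower
\[
W = W^{(0)} \longleftarrow W^{(1)} \longleftarrow W^{(2)} \longleftarrow \cdots
\]
whose connectivities grow without bound, so that $W$ is recovered as $\holim_n W^{(n)}$ and lies in $\R$.

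The main obstacle is arranging the tower so that its stages genuinely lie in $\R$ \emph{and} its inverse limit reconstructs $W$ itself (rather than, say, $\om \s W$, which is also in $\R$ but is a strictly different space). The subtlety is that the James--Milnor splittings of $\om \s(A \vee V)$ naturally introduce smash products $A \smsh V^{\smsh k}$, which are \emph{not} in $\A^\wdg$ without a smash-closure hypothesis on $\A$. The resolution must be to stay with $\om \s(-)$ applied to wedges already in $\A^\wdg$, never descending to smash products of the constituents, and to rely on the unbounded growth of connectivities to force the homotopy limit to coincide with $W$---in analogy with the role that $\holim W_n \sim \ast$ plays in the proof of Theorem~\ref{thm:1}.
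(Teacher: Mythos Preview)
Your plan has a structural gap that cannot be repaired along the lines you sketch.  The tower you build consists of iterated homotopy fibers
\(
W = W^{(0)} \leftarrow W^{(1)} \leftarrow W^{(2)} \leftarrow \cdots
\)
whose connectivities increase without bound; but that forces $\holim_n W^{(n)} \sim *$, not $W$.  What you actually produce is a sequence of fiber sequences $W^{(n)} \to W^{(n-1)} \to W_n$ with each $W_n \in \R$, and to deduce $W \in \R$ from this you would need closure under \emph{extensions by fibrations}---i.e., you would need $\R$ to be a \emph{strong} resolving class, which is not assumed.  (Compare the proof of Theorem~\ref{thm:1}, which uses exactly this kind of fiber tower but gets away with it because the hypothesis there is that $\R$ is a resolving \emph{kernel}.)  Relatedly, the ``strong induction on $<$'' is not valid: the order $<$ of Section~\ref{subsection:prelimcollections} is not well-founded---infinite descending chains exist, and indeed their existence is precisely what makes the $\holim$ contractible in Theorem~\ref{thm:1}.

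What is needed instead is a tower of maps \emph{out of} $W$, say $W \to Y_{(n)}$ with each $Y_{(n)} \in \R$ and with connectivity tending to infinity, so that $W \sim \holim_n Y_{(n)} \in \R$.  Your map $W \to \Omega\Sigma W$ is the case $n=2$ of the right construction, but iterating $\Omega\Sigma$ does not increase the connectivity of the comparison.  The paper's device is to form, for each $n$, the strongly cocartesian $n$-cube generated by $n$ copies of $W \hookrightarrow CW$; every vertex other than the initial one is a finite wedge of copies of $\Sigma W$, so the homotopy limit $Y_{(n)}$ of the punctured cube lies in $\R$.  The $n$-ad Blakers--Massey theorem then says $W \to Y_{(n)}$ is an $n$-equivalence, and the cubes are compatible as $n$ varies, giving the desired tower.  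This cubical excision step is the missing idea.
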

 


 
 
\begin{proof}
To prove the theorem, it suffices to show that  if $X$
is simply-connected and 
$\bigvee_1^k\s X\in \R$ for all $k$, then $X\in \R$.
This   is a 
 consequence, known to Barratt in the 1950s,  of 
the generalization of the Blakers-Massey theorem to $n$-ads of 
simply-connected spaces, proved in \cite{MR0085509,MR0075589}. 

Start with $n$ copies of the inclusion $X\inclds CX$ and build a strongly
cocartesian $n$-cube by repeatedly forming (homotopy)
pushouts.  The result is an $n$-cube with each entry (except $X$)
a wedge of copies of $\s X$.   Remove $X$
from the cube and form the homotopy limit,  $Y_{(n)}\in \R$.
 The $n$-ad excision theorem implies that
 the
natural comparison map $X \to Y_{(n)}$  is an $n$-equivalence.   
These cubes map to one another, 
leading to a morphism of  towers
\[
\xymatrix{
	\cdots	%
                 \ar@{=}[r]^-{  }
		\ar@{}[rd]|{     }
& 
X	%
                 \ar[d]_{  }  
                 \ar@{=}[r]^-{  }
		\ar@{}[rd]|{     }
& 
X	%
                 \ar[d]_{  }  
                 \ar@{=}[r]^-{  }
		\ar@{}[rd]|{     }
& 
	\cdots	%
                 \ar@{=}[r]^-{  }
		\ar@{}[rd]|{     }
& 
X	%
                 \ar[d]_{  }  
                 \ar@{=}[r]^-{  }
		\ar@{}[rd]|{     }
& 
	X	%
                  \ar[d]^{  }
\\ 
	\cdots	%
	         \ar[r]^-{  }
&	       
Y_{(n+1)}%
	         \ar[r]^-{  }
&  
Y_{(n)}%
	         \ar[r]^-{  }
&  
	\cdots	%
	         \ar[r]^-{  }
&  
Y_{(1)}%
	         \ar[r]^-{  }
&  
	Y_{(0)}	%
}
\]
in which the vertical maps become ever more highly connected as $n$ 
increases.  
The homotopy limit $Y$ is then both in $\R$ and 
weakly equivalent to $X$.
\end{proof}

The last of our three main theorems on resolving classes 
concerns their closure under certain extensions by cofibrations.

%
%

 \begin{thm}
 \label{thm:4}
 Let $\R$ be a strong resolving class with
 $ 
 \s\A^\wdg , 
 \s\B^\wdg\sseq \R$.
 Assume that $\A\smsh \A\sseq \A$, $\s\A\sseq \A$ 
 and that $   \A \smsh \s \B \sseq \s\B^\wdg $.
If $X$ sits in a cofiber sequence
\[
B \longrightarrow X\longrightarrow   A
\]
with $A\in \A^\wdg$ and $B\in \B^\wdg$, then $X\in \R$.
 \end{thm}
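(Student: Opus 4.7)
My plan is to use the strong resolving property of $\R$ to reduce the problem to showing that the homotopy fiber $F$ of $X\to A$ lies in $\R$. First, Theorem \ref{thm:2} applied to $\s\A^\wdg\sseq\R$ and $\s\B^\wdg\sseq\R$ gives $\A^\wdg,\B^\wdg\sseq\R$; in particular $A,B\in\R$, and closure of $\R$ under fibration extensions reduces the theorem to showing $F\in\R$.

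To control $F$, I would pass to the suspended cofiber sequence $\s B\to\s X\to\s A$, in which $\s A$ is an honest suspension. The fiber $F'=\mathrm{hofib}(\s X\to\s A)$ admits a James/Hopf-style filtration $\s B=F'_0\sseq F'_1\sseq\cdots$ with subquotients $F'_n/F'_{n-1}\simeq\s B\smsh A^{\smsh n}$ and $F'\simeq\hocolim F'_n$; this is transparent in the wedge case $\s X=\s B\wdg\s A$ via the James decomposition $\s\om\s Z\simeq\bigvee_{k\geq 1}\s Z^{\smsh k}$, and the general case follows by standard Ganea-type arguments. Iterating the hypothesis $\A\smsh\s\B\sseq\s\B^\wdg$ together with $\A\smsh\A\sseq\A$ shows that each subquotient $\s B\smsh A^{\smsh n}$ lies in $\s\B^\wdg\sseq\R$. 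A tower argument in the spirit of Theorem \ref{thm:1}, exploiting the connectivity bound $\conn(\s B\smsh Z)\geq\conn(Z)+2$ at each stage, should then place $F'$ in $\R$; combined with $\s A\in\R$ and strong resolving closure applied to $F'\to\s X\to\s A$, this would give $\s X\in\R$. Since the same argument applies to every wedge $\bigvee^n X$ (which inherits the hypothesis via $\bigvee^n B\to\bigvee^n X\to\bigvee^n A$), the proof technique of Theorem \ref{thm:2} then upgrades $\s X\in\R$ to $X\in\R$.

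The hardest step will be converting $F'$'s cofibration filtration (whose subquotients lie in $\R$) into a form that $\R$'s strong closure can handle, since the latter only recognizes fibration extensions and not colimits of cofibration towers. My plan to bridge this is to apply the Puppe sequence to each cofibration stage, extract associated fiber sequences from the cofiber connecting maps, and rebuild the tower as a tower of homotopy pullbacks whose homotopy limit is identified with $F'$ using the connectivity estimates above.
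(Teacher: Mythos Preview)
Your overall architecture matches the paper's: pass to the cofiber sequence
$\bigvee_1^k \s B \to \bigvee_1^k \s X \to \bigvee_1^k \s A$, show that the homotopy
fiber $F_k$ of the second map lies in $\R$, deduce $\bigvee_1^k \s X\in\R$ from
closure under fibration extensions, and then invoke \thmref{thm:2} to desuspend to
$X\in\R$.  The difficulty is exactly where you locate it: getting $F_k\in\R$.

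Your plan to obtain $F_k\in\R$ from a James-type cofibration filtration of $F_k$
with subquotients $\s B\smsh A^{\smsh n}$ has a genuine gap.  A strong resolving
class is closed under homotopy limits and extensions by \emph{fibrations}; it is
not, in general, closed under extensions by cofibrations or under homotopy colimits
of telescopes.  So knowing that each subquotient of a cofibration tower lies in
$\R$ tells you nothing about the colimit.  Your proposed repair---rewriting the
cofibration tower as a tower of homotopy pullbacks via Puppe sequences---does not
work as stated: the fiber of a quotient map $F'_n\to F'_n/F'_{n-1}$ is not
$F'_{n-1}$, and there is no evident way to produce a tower \emph{over} $F_k$ whose
successive fibers lie in $\R$ and whose connectivities go to infinity (the
mechanism that drives \thmref{thm:1}).

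The paper sidesteps this entirely with one extra suspension.  For any cofiber
sequence $P\to Y\to \s Q$ with fiber $F$, one has an honest \emph{splitting}
\[
\s F \;\simeq\; \om\s Q \ltimes \s P \;\simeq\;
\Bigl(\bigvee_{n\geq 0} Q^{\smsh n}\Bigr)\smsh \s P,
\]
not merely a filtration (see \cite[Prop.~4]{MR2029919}).  Applied with
$P=\bigvee_1^k \s B$ and $Q=\bigvee_1^k A$, this gives
$\s F_k \in \A^\wdg\smsh \s\B^\wdg \sseq \s\B^\wdg \sseq \R$, using
$\A\smsh\A\sseq\A$ and $\A\smsh\s\B\sseq\s\B^\wdg$.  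Since wedges of copies of
$\s F_k$ are again in $\s\B^\wdg$, \thmref{thm:2} immediately yields $F_k\in\R$.
This is the missing idea: do not try to assemble $F_k$ out of pieces in $\R$; rather,
suspend $F_k$ once so that it \emph{is} (up to equivalence) a finite-type wedge in
$\s\B^\wdg$, and then desuspend with \thmref{thm:2}.
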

 

\begin{proof}
The proof  depends on a   decomposition
of the homotopy fiber of a principal cofibration:
if $P\to X\to \s Q$ is a cofiber sequence, then the 
suspension of the
homotopy fiber $F$ of $X\to \s Q$ is a half-smash product
\[
\mbox{$
\s F \simeq  
 \om \s Q
 \ltimes
 \s P 
\simeq 
\left(
 \bigvee_{n=0}^\infty Q^{\smsh n}
 \right)
\smsh
\s  P 
$}
\]
(see \cite[Prop. 4] {MR2029919}
for a proof; recall that 
  $Q^{\smsh 0} = S^0$).

Applying this to the cofiber sequence
$
\bigvee_1^k  \s B \to 
 \bigvee_1^k \s X \to 
 \bigvee_1^k \s A
$
we conclude that the homotopy fiber $F_k$ satisfies
\[
\mbox{$
\s F_k \simeq  
\left(
 \bigvee_{n=0}^\infty \left(\bigvee_1^k  A\right) ^{\smsh n} 
 \right)
 \smsh
 \s  B  .
 $}
\]
Therefore 
$\s   F_k\in  \A^\wdg \smsh \s\B^\wdg \sseq  \s\B^\wdg 
\sseq  \R
$, so
Theorem \ref{thm:2} implies that $F_k\in\R$.
Since 
  $\R$ is closed under extension by fibrations,
$\bigvee_1^k \s X\in \R$ for each $k$; 
  then Theorem \ref{thm:2} implies $X\in \R$.
\end{proof}

 Notice that, like the proof of Theorem \ref{thm:1}, 
this argument requires that we work with collections rather than 
individual spaces.  It is not enough to know that $B \in \R$;
rather, we need to know that a vast array of related spaces
are all in $\R$.

\subsection{Cone Length in Resolving Classes}
We finish this section by observing that Theorem \ref{thm:4}
implies a closure property for strong resolving classes
best expressed in terms of cone length.

Recall that throughout this section, the collections $\A$ and $\B$ 
are assumed to contain only simply-connected spaces.

\begin{thm}
\label{thm:DistanceClosure}
 Let $\R$ be a strong resolving class
 with
  $\s \A^\wdg, \s\B^\wdg\sseq \R$.
 Assume that $\A\smsh \A\sseq \A$, $\s\A\sseq \A$ 
 and that $   \A \smsh  \B \sseq \B^\wdg $.
 If there is a map $f: B\to K$ such that $B\in \B^\wdg$
 and $L_{\A^\wdg}(f) < \infty$, then $K\in \R$.
 \end{thm}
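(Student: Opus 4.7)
The plan is to induct on $n = L_{\A^\wdg}(f)$. The base case $n=0$ is immediate: $f$ is a weak equivalence, so $K \simeq B \in \B^\wdg$, and \thmref{thm:2} applied with the hypothesis $\s\B^\wdg \sseq \R$ yields $\B^\wdg \sseq \R$, hence $K \in \R$.

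For the inductive step I will work with the strengthened claim that every space $Y$ admitting an $\A^\wdg$-cone decomposition of length $\leq n$ starting from a space in $\B^\wdg$ satisfies $A \smsh Y \in \R$ for every $A \in \A^\wdg \cup \{S^0\}$. This strengthening self-propagates under the hypotheses of the theorem: for $A \in \A^\wdg$, smashing a length-$n$ cone decomposition of $Y$ with $A$ produces a length-$n$ cone decomposition of $A \smsh Y$ starting from $A \smsh B' \in \B^\wdg$ (using $\A \smsh \B \sseq \B^\wdg$ extended to wedges) whose cofibers remain in $\A^\wdg$ (using $\A \smsh \A \sseq \A$).

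In the inductive step at length $n \geq 1$, I will fix a realizing cone decomposition $B = X_0 \to \cdots \to X_n \simeq K$ with cofiber sequences $A_k \to X_k \to X_{k+1}$; the composite $B \to X_{n-1}$ has length $n-1$, so by the inductive hypothesis $A'' \smsh X_{n-1} \in \R$ for every $A'' \in \A^\wdg \cup \{S^0\}$. I then plan to adapt the proof of \thmref{thm:4} to the final cofiber sequence $X_{n-1} \to K \to \s A_{n-1}$. For each $k \geq 1$, the half-smash formula expresses the suspension of the homotopy fiber $F_k$ of $\bigvee^k K \to \bigvee^k \s A_{n-1}$ as a finite-type wedge of summands of the form $A' \smsh \s X_{n-1}$ with $A' \in \A^\wdg$ (using $\A \smsh \A \sseq \A$ and $\s\A \sseq \A$). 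Each such summand equals $\s A' \smsh X_{n-1}$ and therefore lies in $\R$ by the strengthened induction applied to $X_{n-1}$, so \thmref{thm:2} places $F_k \in \R$. The strong resolving class property applied to the fiber sequence $F_k \to \bigvee^k K \to \bigvee^k \s A_{n-1}$ (whose base lies in $\s\A^\wdg \sseq \R$) yields $\bigvee^k K \in \R$, and a second application of \thmref{thm:2} gives $K \in \R$. The parallel argument, obtained by smashing the entire cofiber sequence with $A \in \A^\wdg$ at the outset, delivers $A \smsh K \in \R$ and completes the strengthened step.

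The hard part will be the bookkeeping around the $m = 0$ summand in the half-smash decomposition, which contributes $\s X_{n-1}$ and is not directly placed in $\R$ by the strengthening as stated. I expect to resolve this either by further strengthening the induction to also track $\bigvee^j \s Y \in \R$ (whose base case holds via $\s B' \in \s\B^\wdg \sseq \R$), or by reorganizing the wedge so that this summand combines with the $m \geq 1$ summands into a single wedge factoring as $\s A^{*} \smsh X_{n-1}$ for some $A^{*} \in \A^\wdg$, letting one invocation of the inductive hypothesis handle all of $\s F_k$ at once.
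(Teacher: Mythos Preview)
Your overall plan---induct on cone length and feed the last cofiber sequence through the machinery of \thmref{thm:4}---is exactly the paper's approach, but there is a genuine gap in your inductive step. You argue that each summand of $\s F_k$ lies in $\R$ and then invoke \thmref{thm:2} to conclude $F_k\in\R$. But \thmref{thm:2} requires $\bigvee^j \s F_k\in\R$ for every $j$, and a resolving class is \emph{not} closed under wedges: knowing the individual summands lie in $\R$ does not place $\s F_k$ itself (let alone its self-wedges) in $\R$. Neither of your proposed further strengthenings cures this, since they still only control summands one at a time. (Incidentally, your ``strengthened'' hypothesis $A\smsh Y\in\R$ for $A\in\A^\wdg$ is not actually stronger than $Y\in\R$: smashing the entire cone decomposition with $A$ shows that $A\smsh Y$ again has $\A^\wdg$-cone length $\le n$ starting from $\B^\wdg$, so it is just another instance of the unstrengthened hypothesis.)

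The paper's remedy is exactly the one flagged in the remark following \thmref{thm:4}: work with \emph{collections}, not individual spaces. Define $\B_n$ to be the collection of all spaces $K$ admitting a map $f\colon B\to K$ with $B\in\B^\wdg$ and $L_{\A^\wdg}(f)\le n$. This collection is closed under finite-type wedges and satisfies $\A\smsh\s\B_n\sseq\s\B_n^\wdg$, so \thmref{thm:4} applies \emph{as a black box} with $\B_n$ in the role of $\B$. In that packaging the half-smash formula lands $\s F_k$ in $\s\B_n^\wdg$ as a single object---not merely summand by summand---which is precisely what \thmref{thm:2} actually needs. Your $m=0$ headache and the bookkeeping you anticipate both evaporate once the argument is organized around $\B_n$; there is then nothing to do in the inductive step beyond citing \thmref{thm:4}.
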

 
 \begin{proof}
%
 Write $\B_n$ for the collection of all spaces 
 $K$ such that there is a map $f: B\to K$ with  $B\in \B^\wdg$ and
 $L_{\A^\wdg}(f)\leq n$.    
 The hypotheses imply that 
 $\A\smsh  \s \B_n   \sseq  \s \B_n^\wdg $ for each $n$.
 We will prove that each $\B_n\sseq\R$ by induction on $n$.

 First of all,  $\B_0 = \B^\wdg \sseq \R$ by Theorem \ref{thm:2}.
Now suppose that $\B_n\sseq \R$, and let $K\in \B_{n+1}$.
The last step in an $\A^\wdg$-cone decomposition for $f: B\to K$ gives a 
cofiber sequence
\[
A_n\longrightarrow K_n \longrightarrow K \longrightarrow \s A_n
\]
with $K_n\in \B_n$ and $A_n\in \A^\wdg$.
 Therefore
   Theorem \ref{thm:4}
 implies that $K\in \R$, so $\B_{n+1}\sseq\R$, and the
 induction is complete.
 \end{proof}

 If we set $\A = \{ *\}$ in Theorem \ref{thm:DistanceClosure}
 (or even Theorem \ref{thm:4}), we recover Theorem \ref{thm:2}
 (which, of course, was used in the proof of Theorem \ref{thm:4}).  
  Taking $\B = \{ *\}$, on the other hand, 
 we derive the following corollary.
 
 \begin{cor}
 \label{cor:ConeLengthCorollary}
 If $\R$ is a strong resolving class with  $\s\A^\wdg \sseq \R$,
  $\A\smsh \A \sseq \A$ and $\s\A \sseq \A$, then 
 $\R$ contains every space $K$ with $\cl_{\A^\wdg}(K)< \infty$.
 \end{cor}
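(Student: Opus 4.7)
The plan is to recover Corollary \ref{cor:ConeLengthCorollary} as the $\B = \{*\}$ specialization of Theorem \ref{thm:DistanceClosure}. The key observation is that with this choice of $\B$, the collection $\B^\wdg$ collapses, up to weak equivalence, to $\{*\}$, because any wedge of basepoints is weakly equivalent to the basepoint. Consequently, all of the hypotheses of Theorem \ref{thm:DistanceClosure} that involve $\B$ become trivial: the containment $\s\B^\wdg \sseq \R$ is automatic, and the smash condition $\A\smsh \B \sseq \B^\wdg$ holds because $A\smsh *\simeq *$ for every $A\in \A$.

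Next, I would note that the remaining hypotheses of Theorem \ref{thm:DistanceClosure}---namely $\s\A^\wdg \sseq \R$, $\A\smsh\A\sseq\A$, and $\s\A\sseq\A$---are precisely the hypotheses assumed in the statement of the corollary. So the only task is to interpret the conclusion. Given a space $K$ with $\cl_{\A^\wdg}(K)<\infty$, the definition in \secref{subsection:ConeLength} gives $L_{\A^\wdg}(*\to K)<\infty$, so we may take $B = *\in \B^\wdg$ and $f: *\to K$ the basepoint inclusion; Theorem \ref{thm:DistanceClosure} then yields $K\in \R$.

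I do not anticipate any serious obstacle; the corollary is a straightforward specialization. The only point that requires a moment of care is confirming the reduction $\{*\}^\wdg \simeq \{*\}$ and noting that the cone-length definition identifies $\cl_{\A^\wdg}(K)$ with the finite-length condition on the map $*\to K$ appearing in the hypothesis of Theorem \ref{thm:DistanceClosure}.
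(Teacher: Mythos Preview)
Your proposal is correct and matches the paper's own argument exactly: the paper derives Corollary~\ref{cor:ConeLengthCorollary} by setting $\B = \{*\}$ in Theorem~\ref{thm:DistanceClosure}, noting that the $\B$-hypotheses then trivialize and that $\cl_{\A^\wdg}(K) = L_{\A^\wdg}(*\to K)$.
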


 
%


\section{Proof of Theorem \ref{thm:main}}

Now we apply the theory of resolving classes
to prove  Theorem \ref{thm:main}.
We begin  with two reductions.


  \begin{cor}
  \label{cor:10}
 Let $\R$ be a resolving kernel.
 \begin{problist}
 \item
   If  
  $\{ S^{nk+1}\st n \geq n_0\}\sseq \R$,  then 
 $\R$ contains all finite-type wedges of simply-connected 
 finite complexes.
 \item
 If $\R$ contains all simply-connected wedges of spheres, 
then $\R$ contains all  wedges of
simply-connected finite-dimensional spaces.
\end{problist}
 \end{cor}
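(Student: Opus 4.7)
The plan is to apply Theorem~\ref{thm:1}, Theorem~\ref{thm:2}, and Corollary~\ref{cor:ConeLengthCorollary} in a bootstrap, treating the cleaner part (b) first and using a similar but more delicate argument for (a).

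For (b), I would apply Corollary~\ref{cor:ConeLengthCorollary} with $\A = \{ S^d \st d \geq 2\}$. This $\A$ is closed under smash product and suspension, and the hypothesis of (b) gives exactly $\Sigma \A^\wdg \sseq \R$. For any simply-connected finite-dimensional CW complex $K$, the suspension $\Sigma K$ is $2$-connected and admits a finite $\A^\wdg$-cone decomposition starting from $*$---attach a wedge of $2$-spheres to produce $\bigvee S^3 \simeq \Sigma K^{(2)}$, then attach wedges of higher-dimensional spheres tracking the cell structure of $K$---so $\cl_{\A^\wdg}(\Sigma K) \leq \dim K - 1$ and the Corollary puts $\Sigma K$ in $\R$. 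The same argument applied to each finite wedge $\bigvee_1^m \Sigma K \simeq \Sigma(\bigvee_1^m K)$ shows these wedges are in $\R$ as well, so Theorem~\ref{thm:2} with $\A = \{K\}$ descends to give $K \in \R$. Finally, letting $\A_{\mathrm{fd}}$ be the collection of all simply-connected finite-dimensional CW complexes (closed under smash product and suspension), we now have $\Sigma \A_{\mathrm{fd}} \sseq \R$, so Theorem~\ref{thm:1} yields $\Sigma \A_{\mathrm{fd}}^\wdg \sseq \R$ and Theorem~\ref{thm:2} yields $\A_{\mathrm{fd}}^\wdg \sseq \R$.

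For (a), I would first set $\A_0 = \{ S^{nk} \st n \geq n_0\}$, which is smash-closed because $S^{nk} \smsh S^{mk} = S^{(n+m)k}$ and $n+m \geq n_0$. The hypothesis supplies $\Sigma \A_0 \sseq \R$, so Theorem~\ref{thm:1} gives $\Sigma \A_0^\wdg \sseq \R$ (finite-type wedges of spheres $S^{nk+1}$) and Theorem~\ref{thm:2} gives $\A_0^\wdg \sseq \R$ (finite-type wedges of spheres $S^{nk}$). To extend from these to arbitrary simply-connected finite complexes, I would use Theorem~\ref{thm:DistanceClosure} (possibly iteratively) with $\A_0$ playing the role of $\A$ and a carefully chosen $\B$ built from the available sphere families, exploiting that any simply-connected finite complex has a finite cone decomposition. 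Once every simply-connected finite complex lies in $\R$, a final application of Theorems~\ref{thm:1} and \ref{thm:2}, exactly as in the last paragraph of the argument for (b) but now with $\A = $ simply-connected finite complexes, yields that $\R$ contains all finite-type wedges of such complexes.

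The main obstacle will be the upgrade step in (a): passing from the arithmetic-progression family $\{S^{nk+1}\}$ to all simply-connected finite CW complexes, whose cells can live in any dimension. The compatibility condition $\A \smsh \B \sseq \B^\wdg$ in Theorem~\ref{thm:DistanceClosure}, together with the requirements $\A \smsh \A \sseq \A$ and $\Sigma \A \sseq \A$, tightly constrains how the two collections may interact with respect to the residue $\bmod\ k$, and making the right choices---likely by bootstrapping through the collection of wedges of $S^{nk}$ and $S^{nk+1}$ already shown to lie in $\R$, and then invoking the homotopy-limit closure of $\R$ on appropriate fibration sequences to inject further spheres---is where the technical work of (a) lies.
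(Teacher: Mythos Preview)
Your treatment of (b) is essentially the paper's argument: apply Corollary~\ref{cor:ConeLengthCorollary} with a sphere collection to land in the $2$-connected range, then use Theorems~\ref{thm:1} and~\ref{thm:2} on the class of all simply-connected finite-dimensional complexes to finish. No complaints there.

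Part (a), however, has a real gap. You correctly note that Theorem~\ref{thm:DistanceClosure} and Corollary~\ref{cor:ConeLengthCorollary} require $\Sigma\A\sseq\A$, and that your $\A_0=\{S^{nk}\st n\geq n_0\}$ fails this for $k>1$; but you do not resolve the difficulty, and the workarounds you sketch (choosing a clever $\B$, invoking fibration-closure to ``inject further spheres'') are both vague and unnecessary. The missing idea is much simpler: Theorem~\ref{thm:2} by itself lets you desuspend one step at a time. Once Theorem~\ref{thm:1} and Theorem~\ref{thm:2} give $\A_0^\wdg\sseq\R$, you have in particular $\bigvee_1^m S^{d+1}\in\R$ for every $m$ and every $d+1$ of the form $n k$ with $n\geq n_0$; taking $\A=\{S^d\}$ in Theorem~\ref{thm:2} then yields $\bigvee_1^m S^d\in\R$ for all $m$. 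Iterating this single-step desuspension from $d=n_0k$ down to $d=2$ (and likewise across each gap $[(n-1)k+1,nk]$) gives $\bigvee_1^m S^d\in\R$ for every $m$ and every $d\geq 2$. A final pass through Theorems~\ref{thm:1} and~\ref{thm:2} (with $\A$ the collection of all simply-connected spheres) then puts every simply-connected finite-type wedge of spheres in $\R$.

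At that point the rest of your plan works cleanly: Corollary~\ref{cor:ConeLengthCorollary}, now with the full sphere collection (which \emph{is} suspension-closed), shows $\Sigma\F\sseq\R$ for $\F$ the simply-connected finite complexes, and Theorems~\ref{thm:1} and~\ref{thm:2} give $\F^\wdg\sseq\R$. So the only thing you were missing is that repeated application of Theorem~\ref{thm:2} already bridges the mod-$k$ gap, with no need for Theorem~\ref{thm:DistanceClosure} at that stage.
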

 
 \begin{proof}
 We begin our proof of (a) by showing 
  that $\R$ contains all simply-connected 
 finite-type wedges of spheres.
 Since $\mathcal{A}= \{S^{nk}\st n \geq n_0\}$  
 is closed under smash product
 and  
 $\s\mathcal{A}\sseq \R$,    we may apply Theorem \ref{thm:1} 
 to conclude 
 $\s \mathcal{A}^\wdg \sseq \R$.
 Repeated application of Theorem \ref{thm:2} implies
 $\bigvee_1^m S^n\in \R$ for all $m\in \NN$ and all $n\geq 2$.
 Then   Theorems \ref{thm:1}  and  \ref{thm:2}
  give the result.
%
%
%
%

Now let $\mathcal{F}$ denote the collection of all simply-connected
finite complexes.  Since every space $K \in \s\mathcal{F}$ has 
finite cone length with respect to the collection of 
simply-connected finite-type wedges of spheres, 
Corollary \ref{cor:ConeLengthCorollary} 
implies that $\s\mathcal{F}\sseq \R$.
Since $\F\smsh\F\sseq \F$ and $\s \F\sseq \F$, 
Theorem \ref{thm:1} implies that $\s \F^\wdg \sseq \R$.
 Theorem \ref{thm:2} implies that $\F^\wdg\sseq\R$, 
proving (a).

For (b), 
observe that  the collection
of all simply-connected wedges of spheres
 is closed under suspension, smash
and finite-type wedge, 
so Corollary \ref{cor:ConeLengthCorollary}
implies that $\R$ contains every $2$-connected 
finite-dimensional space.
 Theorems \ref{thm:1} and \ref{thm:2}, applied to the collection
of all simply-connected finite-dimensional spaces, 
completes the proof.
\end{proof}

Next we establish a simple lemma characterizing the 
rational homotopy type of spaces  like the ones considered
in Theorem \ref{thm:main}.

\begin{lem}
\label{lem:RatTriv}
If $\map_*(X, S^n)\sim *$ for infinitely many values of $n$, 
then $\twdl H^*(X;\QQ) = 0$.
\end{lem}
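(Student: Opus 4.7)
The plan is to show that the hypothesis forces the stable cohomotopy $\pi^k_s(X)$ to vanish for every $k>0$, then invoke the identification $\pi^k_s(X)\otimes\QQ\cong H^k(X;\QQ)$ coming from Serre's theorem that the sphere spectrum is rationally an Eilenberg--MacLane spectrum.

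First, unwinding the hypothesis: $\map_*(X,S^n)\sim *$ is equivalent to $[\s^j X,S^n]=\pi_j\map_*(X,S^n)=0$ for all $j\ge 0$, so we obtain this vanishing for every $n$ in some infinite set $\{n_i\}$ and every $j\ge 0$. Fixing $k>0$, the $k$-th stable cohomotopy group is the sequential colimit
\[
\pi^k_s(X) \;=\; \colim_{j\to\infty}\,[\s^j X,\,S^{j+k}]
\]
along iterated suspension. Any element is represented by some $f\in[\s^{j_0}X,S^{j_0+k}]$ at a finite stage $j_0$; since $\{n_i\}$ is infinite, it contains some $n\ge j_0+k$, and setting $m=n-j_0-k\ge 0$ places the iterated suspension $\s^m f$ in the trivial group $[\s^{j_0+m}X,S^n]=0$. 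Hence $f$ represents zero in the colimit, and $\pi^k_s(X)=0$ for every $k>0$.

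Because $\pi_*(\mathbb{S})\otimes\QQ$ is concentrated in degree zero, the rationalized Atiyah--Hirzebruch spectral sequence for stable cohomotopy collapses, yielding a natural isomorphism $\pi^k_s(X)\otimes\QQ\cong H^k(X;\QQ)$. Combined with the vanishing above, this forces $\twdl H^*(X;\QQ)=0$. The only point requiring care is the convergence of the rationalized AHSS when $X$ is merely of finite type rather than finite: this follows because the finite-type hypothesis makes each $H^p(X;\QQ)$ finite-dimensional, so the spectral sequence converges strongly and the identification holds. This technical check is the main thing to pin down to make the argument complete.
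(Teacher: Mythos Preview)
Your approach is genuinely different from the paper's.  The paper stays entirely unstable: after suspending to make $X$ simply-connected, for each $k$ it chooses $n\geq k+2$ with $\map_*(X,S^n)\sim *$, so $[X,\Omega^{n-k}S^n]=0$; it then argues that the rationalization $\ell:\Omega^{n-k}S^n\to\Omega^{n-k}S^n_{\QQ}$ induces rationalization on $[X,\?]$, whence $[X,\Omega^{n-k}S^n_{\QQ}]=0$.  Since a rational loop space splits as a product of Eilenberg--Mac~Lane spaces, $K(\QQ,k)$ is a retract of $\Omega^{n-k}S^n_{\QQ}$ and $H^k(X;\QQ)=[X,K(\QQ,k)]=0$.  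No spectra and no spectral sequences appear.

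Your first step, showing $\pi^k_s(X)=0$ for all $k>0$, is correct and elegant.  The gap is in the second step.  The identification $\pi^k_s(X)\otimes\QQ\cong H^k(X;\QQ)$ is valid for \emph{finite} $X$, but for infinite complexes it is not a consequence of ``the rationalized AHSS collapses,'' even under a finite-type hypothesis (which, incidentally, the lemma does not assume).  The issue is not convergence per se but the interaction of $\otimes\,\QQ$ with the infinite filtration.  Concretely: in the AHSS $E_2^{p,q}=H^p(X;\pi_{-q}^s)\Rightarrow\pi^{p+q}_s(X)$, no differentials enter the row $q=0$, so $E_{r+1}^{k,0}=\ker d_r\subseteq E_r^{k,0}$ with finite quotient (the image of $d_r$ sits in $E_r^{k+r,1-r}$, which is finite when $X$ has finite type).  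Thus every $E_r^{k,0}$ has finite index in $H^k(X;\ZZ)$, so $E_r^{k,0}\otimes\QQ=H^k(X;\QQ)$ for each finite $r$; but $E_\infty^{k,0}=\bigcap_r E_r^{k,0}$ can vanish while $H^k(X;\QQ)\neq 0$, exactly as $\bigcap_m m!\,\ZZ=0$ inside $\ZZ$.  So from $\pi^k_s(X)=0$ (hence $E_\infty^{k,0}=0$) you cannot read off $H^k(X;\QQ)=0$ via the AHSS.  Equivalently, the Hurewicz cofibre $\mathbb{S}\to H\ZZ$ has finite homotopy groups, but its cohomology on an infinite complex can be a nontorsion profinite group, so the long exact sequence does not force $H^k(X;\ZZ)\otimes\QQ$ to vanish.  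The paper's unstable argument avoids this by never passing through a stable or inverse-limit construction.
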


\begin{proof}
Since $\s X$ also satisfies the hypotheses
and   $\twdl H^*(X;\QQ) = 0$ if and only if 
$\twdl H^*(\s X;\QQ) = 0$, we may assume 
that $X$ is simply-connected.
%
%
To show that $\twdl H^k(X;\QQ) = 0$, choose $n$ such that 
 $k\leq n-2$ and  $\map_*(X, S^n) \sim *$.  
The map
 $
 \ell_*: [X, \om^{n-k} S^{n}]
 \longrightarrow
  [X, \om^{n-k} S^{n}_\QQ]
 $
 induced by the rationalization of spaces
 $\ell : \om^{n-k} S^{n} \to \om^{n-k} S^{n}_\QQ$
 is rationalization of abelian 
 groups.
 But  
 $\map_*(X, S^{n})\sim *$ implies  $[X, \om^{n-k} S^{n}] = 0$, so 
 $[ X , \om^{n-k} S^n_{\QQ}] = 0$.
 Since every rational loop space splits as a product of
 Eilenberg-Mac Lane spaces, 
 $K(\QQ, k )$ is a retract of $\om^{n-k} S^{n}_\QQ$
and
 $
 \twdl H^k(X; \QQ) =  [ X, K(\QQ, k)] = 0
 $.
  %
%
%
\end{proof}

Now we are prepared to prove   
Theorem \ref{thm:main}.

\begin{proof}[Proof of Theorem \ref{thm:main}]
  Let $X$ be a space satisfying the hypotheses
of Theorem \ref{thm:main}, and suppose that $K$
is a simply-connected finite-dimensional 
CW complex.  Since   the resolving kernel
\[
\R = \{ K  \st \map_*(X, K) \sim *\}
\]
  contains $S^{nk+1}$ for all sufficiently large $n$,  
  Corollary \ref{cor:10}(a) guarantees that it contains
  all finite-type wedges of simply-connected finite
  complexes. 
  According to Corollary \ref{cor:10}(b), it suffices to show
 that 
 \[
 [ \s^t X, W]  =  \pi_t( \map_*(X, W)) = 0
 \]
 for all $t\geq 0$,
 where $W = \bigvee_{i\in \I} S^{n_i}$
 is a simply-connected wedge of spheres.
   So we choose  a typical map 
   $f: \s^t X \to W$ and attempt to show it is trivial.

Since  $X$ has finite type, each skeleton
 $(\s^t X)_k$ is compact so 
$f((\s^t X)_k)$ is contained in a finite subwedge $V \sseq W$.
It follows that $f$ factors through the inclusion of a \textit{countable}
subwedge of $W$, and so we may as well assume in retrospect
that $W$ is itself a countable wedge.
Furthermore, we know from Corollary \ref{cor:10}(a) that
$\map_*(X, V) \sim *$, so there is a
homotopy commutative diagram 
\[
\xymatrix{
&&&& (\s^t X)_k
\ar[d]
\ar@(r,u)[rrdd]^{f|_{(\s^t X)_k}}
\ar@(l,ru)[lllldd]
\\
&&&& \s^t X \ar[d]^{*} \ar@/_/[lld]_f
\\
V
\ar[rr]^-i
\ar@(rd,ld)[rrrr]_-{\id_V}
&&W \ar[rr]^-q && V \ar[rr]^-{i} && W, 
}
\]
in which $q$ is the collapse map to $V$ and $i$ is the inclusion.
This shows that $f|_{(\s^t X)_k}\simeq *$ for every $k$, and hence that 
$f$ is a phantom map.  
The   conclusion $f\simeq *$   follows from
Lemma \ref{lem:RatTriv} 
and
  Proposition \ref{prop:BLA}.  
  Thus $\map_*(X,K) \sim *$ if $K$ is simply-connected.

Finally allow the possibility that $K$ is not simply-connected 
and assume that 
$\pi_1(X)$ has no nontrivial perfect quotients.
Write $G = \im ( \pi_1(f))$
and consider the covering  $q: L \to K$ 
corresponding to the subgroup $G \sseq \pi_1(K)$.
There is a lift $\phi$ in the diagram
\[
\xymatrix{
&& L \ar[d]^q
\\
\s^t X \ar[rr]^-f\ar@/^/@{..>}[rru]^\phi  && K
}
\]
which induces a surjection on 
fundamental groups.  If $G = \{ 1\}$, then $L$
is simply-connected and finite-dimensional, 
and $\phi\simeq *$ by the simply-connected 
part of Theorem \ref{thm:main}.  
If $G \neq \{ 1\}$, then it is not perfect, so there
is a nontrivial map $u: L \to K(A,1)$ 
for some abelian group $A$
 ($u$ can be chosen so that 
$u_*: \pi_1(L) \to A$ is 
 abelianization).
Thus $\phi$ is nonzero on cohomology and so
its suspension
 $\s \phi : \s^{t+1} X\to \s L$ is also nontrivial, 
 contradicting the
   simply-connected part of  Theorem \ref{thm:main}.

%
%
\end{proof}

\section{Discussion}

\subsection{Some Comments on Theorems}
%
Corollary \ref{cor:ConeLengthCorollary} 
  implies a bit more than is actually stated.
Since  the collection
$\cl_{<\infty}(\A^\wdg)$ of spaces $K$ with finite $\A^\wdg$-cone length
is closed under suspension and smash, 
we   find that 
\[  
\cl_{<\infty}(\A^\wdg)\sseq 
\cl_{<\infty}( (\cl_{<\infty}(\A^\wdg))^\wdg )
\sseq
\cl_{<\infty}(\cl_{<\infty}( (\cl_{<\infty}(\A^\wdg))^\wdg )^\wdg )
\sseq
\cdots
\sseq \R
.
\]
These are genuine improvements:  for example, they 
imply that for $X$ as in Theorem \ref{thm:main}
\[
\map_* 
\left( 
\mbox{$X, \bigvee_{n=2}^\infty ( \om S^{n+1})_{n^2} $}
\right)
\sim *
\]
(the subscript $n^2$ denotes dimension of a CW skeleton);
 in this example, the
 target 
 has infinite Lusternik-Schnirelmann category and hence infinite
cone length with respect to any collection $\A$.

Perhaps the reader is thinking that the insistence on \textit{finite-type}
wedges in Theorems \ref{thm:1} and \ref{thm:2}
is simply a matter of expediency---that    
`finite-type'   could be eliminated
 if we tried hard enough.
But it is not true that a space $X$ satisfying the conditions of 
Theorem \ref{thm:main} satisfies the condition $\map_*(X , W)\sim *$ 
for \textit{all} wedges of finite complexes $W$; 
indeed all such spaces $X$ that are not killed by suspension
 \textit{must} have  nontrivial 
maps into certain wedges of finite complexes.

\begin{thm}
\label{thm:ExistPhantom}
 If $X$ is as in Theorem \ref{thm:main} and $\s X \not\sim *$, 
then the universal phantom map 
$\Theta_X: X\to \bigvee_{n=1}^\infty\s X_n$
is nontrivial.
\end{thm}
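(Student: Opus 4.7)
The plan is to argue by contrapositive: suppose $\Theta_X \simeq *$ and aim for $\s X \sim *$, which contradicts the hypothesis. Recall that $\Theta_X$ is the Puppe connecting map of the mapping-telescope cofiber sequence
\[
\bigvee_{n \geq 0} X_n \xrightarrow{\mathrm{sh}-\mathrm{id}} \bigvee_{n \geq 0} X_n \longrightarrow X,
\]
and is universal in the sense that $\Theta_X \simeq *$ is equivalent to the vanishing of $\Ph(X, Y)$ for every $Y$.

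First I would convert this phantom vanishing into a splitting: the nullity of the connecting map $\Theta_X$ makes the telescope cofiber sequence split after sufficiently many suspensions, so that $\s^N X$ becomes a retract of $\bigvee \s^N X_n$ for some $N \geq 1$. Next, assuming one can show $\bigvee \s^N X_n \in \R = \{K \mid \map_*(X, K)\sim *\}$, closure of $\R$ under retracts (which is immediate from $\map_*(X, -)$ preserving retracts) gives $\s^N X \in \R$. Finally, the loop-suspension adjunction
\[
\map_*(\s^N X, \s^N X) \simeq \om^N \map_*(X, \s^N X) \sim *
\]
forces $\mathrm{id}_{\s^N X}$ to be nullhomotopic, hence $\s^N X \sim *$; since $\s X$ is simply-connected and has the same reduced integral homology as $\s^N X$ up to degree shift, Hurewicz then yields $\s X \sim *$.

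The main obstacle, and the technical heart of the argument, is placing $\bigvee \s^N X_n$ in $\R$. \corref{cor:10}(b) supplies only \textit{finite-type} wedges of simply-connected finite-dimensional spaces in the sense of \S\ref{subsection:prelimcollections}, while the summands $\s^N X_n$ typically have bounded rather than growing connectivity, so $\bigvee \s^N X_n$ is not of this form. In fact this obstruction is essentially the content of \thmref{thm:ExistPhantom}, since $\bigvee \s X_n \in \R$ would immediately force $\Theta_X$ to be null. Closing this gap would likely proceed either by invoking \propref{prop:SameLoop} to replace the target by a space with the same loop space for which \propref{prop:BLA} (together with the rational triviality of $X$ from \lemref{lem:RatTriv}) provides the needed phantom vanishing, or by refining the splitting so that $\s^N X$ lands in a genuine finite-type subwedge where \corref{cor:10}(b) applies.
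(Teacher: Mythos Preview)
Your overall strategy (argue by contrapositive, invoke the Gray--McGibbon splitting so that $\Sigma X$ is a retract of $\bigvee_n\Sigma X_n$, then force $\id_{\Sigma X}\simeq *$) matches the paper's, but the step you flag --- placing $\bigvee_n\Sigma^N X_n$ in $\R$ --- is a genuine gap that you do not close. Neither of your proposed fixes works as written: $\bigvee_n\Sigma^N X_n$ is not a wedge of spheres and has no reason to share a loop space with one, so \propref{prop:SameLoop} and \propref{prop:BLA} do not apply; and there is no reason for the section $\Sigma X\to\bigvee_n\Sigma X_n$ to factor through a finite-type subwedge.

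The missing idea, which is exactly what the paper does, is to use the hypothesis $\Theta_X\simeq *$ a \emph{second} time. The argument from the proof of \thmref{thm:main} --- compact skeleta of $\Sigma X$ land in a finite subwedge $V\subseteq\bigvee_n\Sigma X_n$, and $V\in\R$ by \corref{cor:10}(a) since $V$ is a finite wedge of simply-connected finite complexes --- shows that every map $\Sigma X\to\bigvee_n\Sigma X_n$ is a \emph{phantom} map. In particular the section $s$ is phantom, hence so is $\id_{\Sigma X}=r\circ s$. But universality of $\Theta_X$ means $\Theta_X\simeq *$ forces $\Ph(X,\?)=*$, and hence $\Ph(\Sigma X,\?)=*$; so $\id_{\Sigma X}$ is a phantom map out of a space admitting no nontrivial phantom maps, and therefore $\Sigma X\sim *$. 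Equivalently, this reasoning does show $\bigvee_n\Sigma X_n\in\R$ \emph{under the additional hypothesis} $\Theta_X\simeq *$, thereby closing your gap --- the point is that one only needs the finite subwedges to lie in $\R$ to conclude that all maps into the full wedge are phantom, and then the assumed triviality of $\Theta_X$ kills those phantoms.
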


\begin{proof}
Just as in the proof of Theorem \ref{thm:main}, we can show 
that every map from $\s X$ to a wedge of finite complexes must be a 
phantom map.  It is shown in \cite[Thm. 2]{MR1217076} 
that if $\Theta_X \simeq *$
then $\s X$ is a retract (up to homotopy) of a wedge of finite complexes.  
Thus $\Theta_X \simeq *$ simultaneously implies   that
 $\id_{\s X}$ is   a phantom map and   that $\s X$ is not the 
 domain of any nontrivial phantom maps.
 \end{proof}
 
 The conclusion $\map_*(X, K) \sim *$ for non simply-connected 
 spaces $K$ in Theorem \ref{thm:main}
 can be deduced more generally.  It is valid provided 
    no homomorphism from 
 $\pi_1(X)$ to $\pi_1(K)$ can have a nontrivial perfect group
 as its image.  This is the case, for example, if $\pi_1(K)$ is 
 hypoabelian, 
 regardless of the structure of $\pi_1(X)$.
  The restriction on fundamental groups cannot be entirely dispensed
  with, however.
  Any nontrivial acyclic $2$-dimensional complex $X$ 
  satisfies $\map_*(X, K)\sim *$
 for all finite-dimensional spaces $K$ with hypoabelian 
fundamental groups, but $\map_*(X, X) \not\sim *$.
  Such spaces also demonstrate the
need for the hypothesis $\s X \not \sim *$ in 
Theorem \ref{thm:ExistPhantom}.



\subsection{The Sphere Code of a Space}
Corollary \ref{cor:10}(a) suggests an interesting array of questions.
Define the \term{sphere code} of a space $X$ to 
be the set 
\[
 \sigma(X)= \{ n\in \NN  \st \map_*(X, S^n) \sim *\}.
 \]
 (This can be extended to resolving classes:   
 $\sigma(\R) = \{ n \st S^n \in \R\}$.)
We have shown in Corollary \ref{cor:10}(a) 
 that if $\sigma(X)$ contains an infinite
arithmetic sequence of the form
$\{ nk + 1 \st n \geq n_0\}$, then
$\sigma(X) = \NN$.     What else can 
be said of it?  

We offer only a few simple observations, followed by some 
questions.

\begin{prop}
Let $X$ and $Y$ be CW complexes.
\begin{problist}
\item
If $2\in \sigma(X)$, then $1\in \sigma(X)$;  
if $4\in \sigma(X)$, then $3\in \sigma(X)$;  
if $8\in \sigma(X)$, then $7\in \sigma(X)$.
\item
If $X$ is $p$-local ($p$ is an odd prime) and $2n\in \sigma(X)$, then
$2n-1\in \sigma(X)$.\footnote{Thus if 
$\{ nk+1+\epsilon_n \st n\geq n_0, \epsilon_k \in \{ 0,1\}
\}\sseq \sigma(X)$, then $\sigma(X) = \NN$.}
 \item
$\sigma(X\wdg Y) = \sigma(X) \cap \sigma(Y)$.
\item
$\sigma(X\smsh Y) \supseteq \sigma(X) \cup \sigma(Y)$.
\end{problist}
\end{prop}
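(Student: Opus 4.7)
The plan is to handle each part independently: (c) and (d) are formal consequences of standard adjunctions, while (a) and (b) reduce to classical product decompositions of $\om S^{2n}$. For (c), the universal property of the pointed wedge yields a natural weak equivalence $\map_*(X\wdg Y, S^n) \sim \map_*(X, S^n)\cross \map_*(Y, S^n)$, and a product of based spaces is contractible exactly when both factors are; hence $n\in \sigma(X\wdg Y)$ iff $n\in \sigma(X)\cap \sigma(Y)$. For (d), the smash-hom adjunction gives $\map_*(X\smsh Y, S^n) \sim \map_*(Y, \map_*(X, S^n))$, so if $n\in \sigma(X)$ the inner mapping space is contractible and therefore the outer one is too; swapping the roles of $X$ and $Y$ yields the full inclusion $\sigma(X)\cup \sigma(Y)\sseq \sigma(X\smsh Y)$.

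For (a), the key input is the classical \emph{integral} product decomposition $\om S^{2n}\simeq S^{2n-1}\cross \om S^{4n-1}$, valid precisely for $n\in\{1,2,4\}$. This comes from the existence of a map $h: S^{4n-1}\to S^{2n}$ of Hopf invariant one, since the James--Hopf invariant $H:\om S^{2n}\to \om S^{4n-1}$ then admits $\om h$ as a right inverse and has homotopy fiber $S^{2n-1}$. Granting the splitting, $2n\in \sigma(X)$ yields $\map_*(X,\om S^{2n})\sim *$, and projecting onto the first factor forces $\map_*(X, S^{2n-1})\sim *$, so $2n-1\in \sigma(X)$.

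Part (b) runs along the same lines, using Serre's well-known $p$-local splitting $(\om S^{2n})_{(p)}\simeq S^{2n-1}_{(p)}\cross (\om S^{4n-1})_{(p)}$, which holds at every odd prime $p$ and every $n$ (the obstruction at $p=2$ being noncommutativity of the mod-$2$ Pontryagin ring). Since $X$ is $p$-local and the relevant spheres are nilpotent, $\map_*(X, S^m)\sim \map_*(X, S^m_{(p)})$, so the hypothesis $\map_*(X, S^{2n})\sim *$ transfers to the $p$-localized target and the same projection argument concludes. The main obstacle is just invoking and correctly formulating the integral splitting in (a) together with its odd-primary analog in (b); once these are in place, everything else is a one-line formality.
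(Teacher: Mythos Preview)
The paper explicitly omits the proof of this proposition (``We omit the proof and offer a few questions about sphere codes''), so there is nothing to compare against directly. Your argument is correct and is almost certainly the one the author had in mind: parts (c) and (d) are the formal adjunctions you give, and parts (a) and (b) follow from the classical loop-space splittings $\om S^{2n}\simeq S^{2n-1}\times \om S^{4n-1}$ (integrally for $n\in\{1,2,4\}$ via Hopf invariant one, and $p$-locally for all $n$ at odd primes by Serre), exhibiting $S^{2n-1}$ as a retract of $\om S^{2n}$.

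One small remark on (b): the assertion $\map_*(X,S^m)\sim\map_*(X,S^m_{(p)})$ for $p$-local $X$ is correct but deserves one more sentence. The homotopy fiber of $S^m\to S^m_{(p)}$ (for $m\geq 2$) has homotopy groups that are torsion of order prime to $p$, hence is built from Eilenberg--Mac\,Lane spaces $K(A,k)$ with $A$ prime-to-$p$ torsion; since $H_*(X;\ZZ)$ is $\ZZ_{(p)}$-local, $H^*(X;A)=0$ and so $\map_*(X,\text{fiber})\sim *$. The case $n=1$ is already handled integrally by (a), so the restriction $m\geq 2$ costs nothing.
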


We omit the proof and  offer a few questions about 
sphere codes.
\begin{enumerate}
\item
If $\sigma(X)\neq \NN$,  can $\sigma(X)$ contain an 
infinite arithmetic sequence?
\item
Can $\sigma(X)$ be infinite without being all of $\NN$?
 \item
Is it possible to classify the sphere codes of spaces?
Is there a space $X$ such that $\sigma(X) = \{ 1\ \mbox{and all primes}\}$? 
  \item
There is a closure operation for subsets $N\sseq\NN$
given by $\overline N = \sigma( \R)$, where $\R = \Theta(\{ S^n\st n\in N\})$; 
can it be described numerically?
%
\end{enumerate}

%
%
%
%
%
%


\section{Appendix:  Reduction from Algebra to Topology}

The following theorem gives the
 basic algebraic input for the proof of the Sullivan conjecture
 (see Section \ref{subsection:U} for notation and terminology).
 
 \begin{thm}[Miller, Carlsson]
 \label{thm:SullivanAlgebra}
 The unstable $\A_p$-module $\twdl H^*(B\ZZ/p; \FF_p)$ is 
 reduced    and $\twdl H^*(B\ZZ/p; \FF_p) \otimes J(n)$ is   
  injective   for all $n\geq 0$.
 \end{thm}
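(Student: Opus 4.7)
The plan is to verify the two conclusions separately: reducedness of $\twdl H^*(B\ZZ/p; \FF_p)$ is essentially a combinatorial calculation in the polynomial ring, while the injectivity of $\twdl H^*(B\ZZ/p; \FF_p) \otimes J(n)$ is the substantive algebraic content and relies on the structural theory of the category $\U$ of unstable $\A_p$-modules, together with Lannes' $T$-functor.

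For reducedness I would begin from the explicit description $H^*(B\ZZ/p;\FF_p) \cong \FF_p[x]$ with $|x|=1$ when $p=2$, and $H^*(B\ZZ/p;\FF_p) \cong \Lambda(e) \otimes \FF_p[x]$ with $|e|=1$, $|x|=2$, $\beta e = x$ when $p$ is odd. An unstable module is reduced if and only if it admits no nontrivial suspension as a subquotient, which is equivalent to injectivity of the top Steenrod operation $y \mapsto y^p$ on the polynomial part. Since $\FF_p[x]$ is an integral domain this injectivity is immediate, and for odd $p$ the Bockstein accounts for the odd-degree elements.

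For the injectivity claim I would proceed in three steps. \textbf{(i)} Show that each Brown--Gitler module $J(n)$ is injective in $\U$: by construction $\Hom_\U(M, J(n)) \cong (M^n)^*$ naturally in $M$, and since $M \mapsto (M^n)^*$ is exact (vector-space duality being exact), Yoneda implies injectivity. \textbf{(ii)} Invoke Lannes' $T$-functor $T_V : \U \to \U$, the left adjoint of $M \mapsto M \otimes H^*(BV;\FF_p)$, together with Lannes' fundamental theorem that $T_V$ is exact for every elementary abelian $p$-group $V$. \textbf{(iii)} Conclude: exactness of the left adjoint $T_V$ implies that its right adjoint $(-) \otimes H^*(BV;\FF_p)$ carries injectives to injectives, and applying this with $V = \ZZ/p$ to the injective module $J(n)$ from \textbf{(i)} shows that $H^*(B\ZZ/p;\FF_p) \otimes J(n)$, and hence its $\twdl H^*$-summand, is injective in $\U$.

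The hard step is \textbf{(ii)}, the exactness of $T_V$. A proof requires a sufficiently explicit description of $T_V$ on a generating set of $\U$---for instance on the free unstable modules $F(n)$---together with a verification that exactness holds on such generators. Establishing this exactness is precisely the nonabelian homological algebra in $\U$ that forms the core of Miller's and Carlsson's original work on the Sullivan conjecture, so in the outline to follow it is natural to import it as a black box rather than re-derive it.
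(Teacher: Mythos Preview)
The paper does not prove this theorem: immediately after the statement it says ``We will not prove this here'' and refers to Schwartz's book for a proof. So there is no argument in the paper to compare against; the theorem is imported wholesale as a black box.

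Your outline is coherent, but step \textbf{(ii)} conceals a circularity. In the standard development (including Schwartz's book, which is what the paper cites), the exactness of Lannes' functor $T_V$ is \emph{deduced from} the injectivity of $H^*(BV)\otimes J(n)$ in $\U$, not the other way around; indeed the paper itself carries out precisely this deduction in Lemma~\ref{lem:main}(a), where exactness of $\overline\tau$ is derived from the hypothesis that $H\otimes J(n)$ is injective for all $n$. So by black-boxing the exactness of $T$ you have black-boxed a statement equivalent to (and standardly proved via) the theorem you are trying to establish. The actual content of the Miller--Carlsson work is a \emph{direct} proof of the injectivity---Carlsson's bigraded-module argument for $p=2$, and the analogous analysis for odd primes---and Lannes' $T$-functor with its exactness came afterward as a repackaging of that result. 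If you want a genuine reduction you need an independent route to either the injectivity or the exactness of $T$, and no such route is known that escapes this circle.

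One small correction on the reducedness part: ``reduced'' means having no nontrivial suspension \emph{submodule} (equivalently $\twdl\s M = 0$), not subquotient; the top-Steenrod-operation criterion you invoke is correct and detects exactly the submodule condition, so your argument there is fine despite the slip in wording.
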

 
We will not prove this here.\footnote{A proof can be found
 in \cite[Lem. 2.6.5 \& Thm. 3.1.1]{MR1282727}.}
Rather, we show how these  algebraic properties 
 guarantee that $\map_*(B\ZZ/p,S^{2n+1})\sim *$ for $n\geq 1$.
 We begin by reviewing some preliminary material on
the category $\U$ of unstable $\A_p$-algebras.  Then we 
give a brief account of   
  Massey-Peterson towers and finally
  derive
from Theorem \ref{thm:SullivanAlgebra}
  that $\map_*(B\ZZ/p, S^{2n+1})\sim *$ for all 
$n$.

\subsection{Unstable Modules over the Steenrod Algebra}

\label{subsection:U}

The cohomology functor $H^*(\?; \FF_p)$
takes its values in the category $\U$ of   unstable
modules and their homomorphisms.  
An \term{unstable module} over the Steenrod algebra $\A_p$
is a graded $\A_p$-module $M$ satisfying   $P^I(x) = 0$ if 
$e(I) > |x|$, where $e( I )$ is
the excess of $I$ and $|x|$ is the degree
of $x\in M$.     
%
We begin with some basic algebra of unstable modules, 
all of which is (at least implicitly) in \cite{MR1282727}.

\medskip

\paragraph{\bf Suspension of Modules}
An unstable module $M\in \U$ has a \term{suspension}
 $\s M\in \U$
given by $(\s M)^n  = M^{n-1}$.  The functor $\s : \U \to \U$
has a left adjoint $\om$ and a 
right adjoint $\twdl \s$.\footnote{$\twdl \s M$
is the largest suspension module contained in $M$.}
  A module
$M$  
is called \term{reduced} if $\twdl \s M = 0$.

\medskip

\paragraph{\bf Projective and Injective Unstable Modules}
In the category $\U$, there are free modules
$F(n) = \A_p/ E(n)$, where $E(n)$ is the smallest left
ideal containing all Steenrod powers $P^I$
with excess $e(I) > n$.   It is easy to see that 
the assignment $f\mapsto f([1])$ defines 
natural isomorphisms
\[
\Hom_\U (F(n) , M) \xrightarrow{\, \cong\, } M^n .
\] 
This property defines $F(n)$ up to 
natural isomorphism, and shows that $F(n)$ deserves to 
be called a \term{free} module on a single generator of
dimension $n$.  More generally, the free module on 
a set $X= \{ x_\alpha\}$ with $|x_\alpha| = n_\alpha$
is (up to isomorphism) the sum $\bigoplus F(n_\alpha)$
(see \cite[\S 1.6]{MR1282727} for details).
 
 A graded $\FF_p$-vector space $M$ 
is   of \term{finite type} if  
  $\dim_{\FF_p}( M^k) < \infty$  for each $k$.
  Since $\A_p$ is of finite type, so is $F(n)$.
 
 The functor   which takes $M\in \U$ and returns 
the dual $\FF_p$-vector space $(M^n)^*$ is representable:
there is a module $J(n) \in \U$ and a natural isomorphism
\[
\Hom_\U ( M, J(n) ) \xrightarrow{\, \cong\, }
\Hom_{\FF_p}( M^n,\FF_p) .
\]
Since finite sums of vector spaces are also finite products, 
these functors are exact, so the module $J(n)$ is an injective
object in $\U$.

 \subsection{The Functor $\overline \tau$}
 
 In \cite[Thm 3.2.1]{MR1282727} it is shown that 
 for any module $H \in \U$, the functor $H \otimes_{\A_p} \?$
 has a left adjoint, denoted $(? : H)_\U$.    
Fix a module $H$ (to  stand in for $\twdl H^*(X; \FF_p)$)
 and write $\overline \tau$ for the
 functor $(\? :H)_\U$; this is intended to evoke the 
 standard notation $\overline T$ for the special case $H = \twdl H^*(
 B\ZZ/p; \FF_p)$.

\begin{lem}
\label{lem:main}
Let $H\in \U$ be a reduced unstable
module of finite type
and suppose that $H\otimes J(n)$ is injective in $\U$
for every $n\geq 0$.
Then
\begin{problist}
\item
$\overline \tau$ is exact,
\item
$\overline \tau$ commutes with suspension,
\item
if $M$ is free and of finite type, then so is $\overline \tau(M)$, and 
\item
if $H^0 = 0$, then  $\overline \tau(M) = 0$
for any finite module $M\in \U$.
\item
if $H^0 = 0$, then 
$\Ext_{\U}^{s}(H,\s^{s+t}M)=0$ for all $s,t\geq 0$.
\end{problist}
\end{lem}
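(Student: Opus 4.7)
My plan hinges on the adjunction $\Hom_\U(\overline\tau A, B) \cong \Hom_\U(A, H \otimes B)$ and exploits each hypothesis on $H$ in turn: injectivity of $H \otimes J(n)$ forces exactness of $\overline\tau$, reducedness forces commutation with $\Sigma$, finite type forces preservation of free modules, and $H^0 = 0$ forces vanishing on finite modules. Parts (a)--(d) follow fairly directly from the adjunction together with these hypotheses; part (e) is the main obstacle and will require upgrading to a derived-functor statement.

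For (a), since $\overline\tau$ is a left adjoint it is right exact automatically; for left exactness I would note that the $\{J(n)\}_{n\geq 0}$ form an injective cogenerating family in $\U$, and the adjunction identifies $\Hom_\U(\overline\tau(-), J(n))$ with $\Hom_\U(-, H \otimes J(n))$, which is exact by the injectivity hypothesis. For (b), applying the adjunction on both sides reduces the claimed isomorphism $\overline\tau \Sigma A \cong \Sigma \overline\tau A$ to $\twdl\Sigma(H \otimes B) \cong H \otimes \twdl\Sigma B$; at $p = 2$ this amounts to observing via Cartan and instability that the top Steenrod operation satisfies $\phi(h \otimes b) = \phi(h) \otimes \phi(b)$, so injectivity of $\phi$ on the reduced $H$ pushes all the suspendedness into $B$. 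For (c), the adjunction shows $\overline\tau F(n)$ represents $B \mapsto (H \otimes B)^n = \bigoplus_{i+j=n} H^i \otimes B^j$, represented by $\bigoplus_{i=0}^n H^i \otimes F(n-i)$; finite type of $H$ makes each multiplicity finite, so $\overline\tau F(n)$ is itself free of finite type, and additivity of the left adjoint extends this to arbitrary free $M$.

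For (d), the plan is to induct on a composition series for finite $M$, using exactness from (a). The simple unstable modules are the $\Sigma^n\FF_p$'s, and by (b) followed by (c) one has $\overline\tau(\Sigma^n\FF_p) \cong \Sigma^n(H^0 \otimes F(0))$, which is zero as soon as $H^0 = 0$; exactness then propagates the vanishing through the composition series. For (e), the main obstacle, my plan is to upgrade (a) and (c) to the derived level: because $\overline\tau$ is exact and sends free modules to projectives, it carries a projective resolution of $A$ to a projective resolution of $\overline\tau A$, producing the isomorphism $\Ext^s_\U(\overline\tau A, B) \cong \Ext^s_\U(A, H \otimes B)$ for every $s$. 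Combined with (d), this at once gives the blanket vanishing $\Ext^s_\U(A, H \otimes B) = 0$ whenever $A$ is finite. The genuine work in (e) is then to bring this vanishing to bear on the stated claim: the strategy is to use that $\Sigma$ is exact and preserves injectives (via $\Sigma J(n) \cong J(n+1)$) so as to shift the $\Sigma^{s+t}$ across the Ext and convert it into an $\om^{s+t}$ applied on the other side, and then to align the resulting internal-degree shift against the hypothesis $H^0 = 0$ so that the claim collapses onto the vanishing already in hand. This last step of careful degree bookkeeping is where I expect the real difficulty to lie.
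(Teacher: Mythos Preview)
Your treatments of (a)--(d) are correct and coincide with the paper's: the paper defers (a) and (b) to Schwartz and argues (c) and (d) exactly as you do (the Yoneda computation $\overline\tau F(n)\cong\bigoplus_{i+j=n}F(i)^{\oplus\dim H^j}$, then the vanishing on simples $\Sigma^n\FF_p$ propagated through a filtration by exactness).

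For (e) you have in fact already finished, but you do not realize it because the printed statement contains a typo: the roles of $H$ and $M$ are transposed. The paper's own proof computes $\Ext_\U^s(M,\Sigma^{s+t}H)$ for \emph{finite} $M$, and that is what is actually used downstream in Theorem~\ref{thm:FakeHarper} (one needs $\Ext_\U^s(M,\Sigma^s\twdl H^*(\Sigma^tX))=0$). Since $\Sigma^{s+t}H\cong H\otimes\Sigma^{s+t}\FF_p$, your derived adjunction
\[
\Ext_\U^s(A,H\otimes B)\;\cong\;\Ext_\U^s(\overline\tau A,B)
\]
specialized to $A=M$ finite and $B=\Sigma^{s+t}\FF_p$ is exactly the paper's computation, and (d) gives $\overline\tau M=0$, so the Ext group vanishes. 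No further work is required.

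The ``genuine work'' you anticipate is thus unnecessary, and indeed cannot succeed: the statement as literally printed is false. For $H=\twdl H^*(B\ZZ/2;\FF_2)$ and $M=\FF_2$, $s=0$, $t=1$, the map sending the degree-one class $x$ to the generator and $x^n\mapsto 0$ for $n\ge 2$ exhibits $\Hom_\U(H,\Sigma\FF_2)\neq 0$. Your sketch for that extra step also relies on $\Sigma J(n)\cong J(n+1)$, which is not true; $\Sigma J(n)$ is only a proper submodule of $J(n+1)$ in general.
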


\begin{proof}
These results are covered in 
Sections 3.2 and 
 3.3 of \cite{MR1282727}.
 Specifically, parts (a) and (b) are proved as in
 \cite[Thm. 3.2.2 \& Prop. 3.3.4]{MR1282727}.  
 Parts (c) and (d) may be proved following
  \cite[Lem. 3.3.1 \& Prop. 3.3.6]{MR1282727}, but since there are
  some changes needed, we prove those parts here.
%
%

Write $d_k  = \dim_{\FF_p}( H^k)$;
then there are natural isomorphisms
\begin{eqnarray*}
\Hom_\U \left( 
\overline \tau( F(n)) ,   M \right
) 
&\cong&
\Hom_\U \left( 
  F(n) , H \otimes M \right
) 
\\
&\cong &
\Hom_\U \left( \mbox{$\bigoplus_{i+j = n} 
F(i)^{\oplus d_{j}}
,  M$} 
\right) , 
\end{eqnarray*}
proving (c) in the case of a free module on one generator.  
Since $\overline \tau$ is a left adjoint, it
commutes with colimits (and sums in particular), 
we derive the full statement of (c).

If $H^0 = 0$, then $d_0 = 0$ and   $\overline \tau(F(n))$
is a sum of free modules $F(k)$ with $k < n$. 
Since $F(0) = \FF_p$, we see  that  $\overline \tau(\FF_p) = 0$;
then  (a), together with the fact that $\overline \tau$ commutes
with colimits,  implies that $\overline \tau (M) = 0$ for all trivial modules 
 $M$.    Finally, any finite module $M$ has filtration all of whose
 subquotients are trivial, and (d) follows.
%
%

To prove (e), let  $P_*\to M\to 0$ is a free resolution of $M$ in $\U$.
Parts (a), (c) and (d) together
 imply that 
$\overline \tau(P_*) \to 0 \to 0$ is   a free resolution of $0$, so  
\begin{eqnarray*}
\Ext_\U^s \left( M, \s^{s+t} H \right)
&=&
\Ext_\U^s \left( M, H \otimes \s^{s+t}\FF_p)\right)
\\
&=&
H^s\left(  \Hom\left( P_*, H \otimes \s^{s+t}\FF_p\right)\right)
\\
&\cong &
H^s\left( \Hom \left( \overline \tau( P_*), \s^{s+t} \FF_p \right)\right)
\\
&=&
\Ext_\U^s \left( 0, \s^{s+t}\FF_p\right)
\\
&=& 0.
\end{eqnarray*}
\end{proof}

 \subsection{Massey-Peterson Towers}

Cohomology of spaces has more structure than just 
that of an unstable $\A_p$-module:  it has a cup product
which makes $H^*(X; \FF_p)$ into an 
\term{unstable algebra} over $\A_p$.  The category 
of unstable algebras is denoted $\K$.

The forgetful functor $\K\to \U$ 
has a left adjoint $U: \U \to \K$.  A space $X$  is said to have
\term{very nice} cohomology if $H^*(X) \cong U(M)$ for some
unstable module $M$ of finite type.

Since $U(F(n)) \cong H^*(K(\ZZ/p, n))$, 
there is a contravariant functor $K$ which carries a free module
$F$ to a generalized Eilenberg-Mac Lane space (usually abbreviated
GEM) $K(F)$ such that 
$H^*(K(F)) \cong U(F)$.   If $F$ is free, then so is $\om F$, and 
  $K(\om F) \simeq \om K(F)$.

\begin{lem}
\label{lem:Maps2KF}
For any $X$, 
$[ X, K(F)] \cong \Hom_\U( F, \twdl H^*(X))$.
\end{lem}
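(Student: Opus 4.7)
The plan is to reduce to the case of a free module on a single generator, where the statement becomes a standard identification of mod $p$ cohomology with maps into an Eilenberg--Mac Lane space, and then extend to general free modules using that both sides convert sums in $\U$ to products.

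First I would write the free module as $F = \bigoplus_\alpha F(n_\alpha)$ for a set of generators in degrees $n_\alpha$. By construction $K(F(n)) \simeq K(\ZZ/p, n)$, so because the contravariant functor $K$ carries sums of free modules to products of GEMs (a property already implicit in the excerpt's discussion of $K(F)$ as a GEM), we have a weak equivalence
\[
K(F) \simeq \prod_\alpha K(\ZZ/p, n_\alpha).
\]
Then, using that $[X, \cdot]$ takes pointed products to products,
\[
[X, K(F)] \cong \prod_\alpha [X, K(\ZZ/p, n_\alpha)] \cong \prod_\alpha \twdl H^{n_\alpha}(X;\FF_p),
\]
where the second isomorphism is the standard representability of mod $p$ cohomology.

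On the algebraic side, $\Hom_\U$ turns direct sums in the first variable into products, so
\[
\Hom_\U(F, \twdl H^*(X)) \cong \prod_\alpha \Hom_\U(F(n_\alpha), \twdl H^*(X)) \cong \prod_\alpha \twdl H^{n_\alpha}(X;\FF_p),
\]
where the last step is the defining adjunction $\Hom_\U(F(n), M) \cong M^n$ recalled in Section \ref{subsection:U}. Matching the two product expressions termwise (by sending a homotopy class $f : X \to K(F)$ to the induced map $f^* : \twdl H^*(K(F)) \to \twdl H^*(X)$ restricted to the fundamental classes) gives the desired natural isomorphism.

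The only genuine point of care is naturality and the handling of an infinite generating set: one needs $K$ applied to an infinite sum to be the corresponding (categorical) product of $K(\ZZ/p, n_\alpha)$'s, and one needs $[X, \cdot]$ to commute with this product. Both are standard, but are where I would be most explicit; beyond that, the proof is formal adjunction-chasing.
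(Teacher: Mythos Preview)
Your argument is correct and is the standard proof. Note, however, that the paper does not actually give a proof of this lemma: it is stated without proof, presumably as a well-known consequence of the representability of mod $p$ cohomology and the defining property of the free modules $F(n)$. So there is nothing to compare against; your write-up simply supplies the omitted details.
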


It is shown in \cite{MR764593,MR546361,MR0226637} 
that if $H^*(Y) \cong U(M) $   and $P_*\to M\to 0$
is a free resolution in $\U$, 
then $Y$ has a \term{Massey-Peterson tower}
\[
\xymatrix{
\cdots \ar[r] &
Y_{s}\ar[r] \ar[d]& 
Y_{s-1} \ar[r] \ar[d] &
\cdots \ar[r] &
Y_1 \ar[r] \ar[d]&
Y_0\ar[d]
\\
& K(\om^s P_{s+1}) & K(\om^{s-1} P_s) && K(\om P_2) & K(P_1)
}
\]
in which 
\begin{enumerate}
\item
$Y_0 = K(P_0)$, 
\item 
each homotopy group $\pi_k(Y_s)$ is a finite $p$-group, 
\item
the limit of the tower is the $p$-completion $Y^\smsh_p$, 
\item
each sequence $Y_s \to Y_{s-1}\to K(\om^{s-1} P_s)$
is a fiber sequence, and
\item
the compositions   
$
\om K(\om^{s-1} P_s) 
\to
Y_s
\to
 K(\om^s P_{s+1})
$
 can be naturally
 identified with $K( \om^s d_{s+1} )$, where 
$d_{s+1}: P_{s+1}\to P_s$ is the differential
 in the given free resolution.
\end{enumerate}

We use Massey-Peterson towers to 
give a criterion for the vanishing of homotopy sets.


\begin{thm}
\label{thm:FakeHarper}
Suppose  $Y$ is a simply-connected CW complex with
$H^*(Y) = U(M)$ for some finite $M\in \U$
and $Z$ is a  CW complex
 of finite type with $\twdl H^*(Z; \ZZ[{1\over p}]) = 0$.
If    
$
\Ext_\U^s (M , \s^{s} \twdl H^*(Z)) =0 
$
for all $s\geq 0$, then $[   Z, Y ] = *$.
\end{thm}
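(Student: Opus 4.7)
The plan is to study $[Z, Y]$ via the Massey-Peterson tower $\{Y_s\}$ for $Y$ associated to the given free resolution $P_* \to M \to 0$. Since $\twdl H^*(Z; \ZZ[{1\over p}]) = 0$, the $p$-completion map $Y \to Y^\smsh_p = \holim_s Y_s$ should induce a bijection $[Z, Y] \cong [Z, Y^\smsh_p]$, reducing the problem to showing $[Z, Y^\smsh_p] = *$. Applying the Milnor short exact sequence to the tower gives
\[
* \to \limone_s [Z, \om Y_s] \longrightarrow [Z, Y^\smsh_p] \longrightarrow \lim_s [Z, Y_s] \to *,
\]
and the $\limone$ term vanishes by \lemref{lem:CompactLimOneVanish}, since each $\pi_k(Y_s)$ is a finite $p$-group. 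So it suffices to show that every compatible system $(f_s) \in \lim_s [Z, Y_s]$ consists of null-homotopic maps.

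I would then prove $f_s \simeq *$ by induction on $s$. For $s = 0$, the map $f_0 : Z \to Y_0 = K(P_0)$ corresponds by \lemref{lem:Maps2KF} to $\alpha_0 \in \Hom_\U(P_0, \twdl H^*(Z))$; since $f_0$ lifts past the first $k$-invariant $Y_0 \to K(P_1)$, we have $\alpha_0 \circ d_1 = 0$, so $\alpha_0$ factors through $M$ and defines an element of $\Hom_\U(M, \twdl H^*(Z)) = \Ext_\U^0(M, \twdl H^*(Z)) = 0$. For the inductive step, assuming $f_{s-1} \simeq *$, choose a null-homotopy; this lifts $f_s$ to a map $\twdl f_s : Z \to \om K(\om^{s-1} P_s) \simeq K(\om^s P_s)$, corresponding to an element of $\Hom_\U(P_s, \s^s \twdl H^*(Z))$. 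Property (5) of the Massey-Peterson tower identifies the composite $K(\om^s P_s) \to Y_s \to K(\om^s P_{s+1})$ with $K(\om^s d_{s+1})$; combined with the further lift of $f_s$ to $Y_{s+1}$, this forces $\twdl f_s \circ d_{s+1} = 0$, so $\twdl f_s$ is an $s$-cocycle in the complex $\Hom_\U(P_*, \s^s \twdl H^*(Z))$. Its class in
\[
H^s\bigl(\Hom_\U(P_*, \s^s \twdl H^*(Z))\bigr) \;=\; \Ext_\U^s\bigl(M, \s^s \twdl H^*(Z)\bigr)
\]
vanishes by hypothesis, so $\twdl f_s$ is a coboundary and can be eliminated by re-choosing the null-homotopy of $f_{s-1}$, yielding $f_s \simeq *$.

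The main technical obstacle is verifying that changing the null-homotopy of $f_{s-1}$ alters $\twdl f_s$ precisely by an element in the image of $d_s^* : \Hom_\U(P_{s-1}, \s^s \twdl H^*(Z)) \to \Hom_\U(P_s, \s^s \twdl H^*(Z))$, so that $[\twdl f_s]$ is a well-defined Ext class rather than depending on auxiliary choices. This matching of topological indeterminacy with algebraic coboundaries is the $E_2$-identification in the Bousfield-Kan-style spectral sequence of the Massey-Peterson tower, and requires careful bookkeeping with property (5) together with compatibility of the chosen null-homotopies across levels of the tower. A secondary subtlety is the initial reduction $[Z, Y] \cong [Z, Y^\smsh_p]$, which follows from the $\ZZ[{1\over p}]$-acyclicity of $Z$ via a standard localization or fracture argument.
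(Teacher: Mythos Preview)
Your approach is essentially the paper's: reduce to $Y^\smsh_p$ via Miller's result, apply the Milnor sequence to the Massey--Peterson tower, kill $\limone$ with \lemref{lem:CompactLimOneVanish}, and prove $f_s\simeq *$ by induction using the $\Ext$ vanishing. The one point where the paper is more direct than your flagged obstacle suggests: instead of tracking how rechoosing null-homotopies alters $\twdl f_s$, it simply applies $[Z,\?]$ to the Puppe sequence and uses property~(5) to see that the composite $\Hom_\U(P_{s-1},\s^s H)\xrightarrow{d_s^*}\Hom_\U(P_s,\s^s H)\xrightarrow{\alpha}[Z,Y_s]$ factors through $[Z,\om Y_{s-1}]\to[Z,Y_s]$, which is trivial---so any coboundary $g_s$ already satisfies $\alpha(g_s)=*$ without any rechoosing, and the well-definedness issue never arises.
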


\begin{proof}
According to \cite[Thm. 4.2]{MR764593}, the 
natural map $Y \to Y^\smsh_p$ induces 
a bijection $[ Z, Y]  \xrightarrow{\cong}
[Z, Y^\smsh_p]$, so it suffices
to show   $[Z , Y^\smsh_p ] = *$.
Since $H^*(Y) = U(M)$, $Y$ has a Massey-Peterson 
tower, whose homotopy limit is $Y^\smsh_p$.
  Let $f_s$ be the composite $Z\to Y \to Y_s$; we will 
show by induction that $f_s \simeq *$ for all $s$.

Since $Y_0$ is a GEM, 
$f_0$ is determined by its effect on cohomology;
and since
$
\Hom_\U ( M, \twdl H^*(Z)) = 
\Ext_\U^0 (M , \s^{0} \twdl H^*(Z)) = 0
$,
  $f_0$ is trivial on cohomology, and hence trivial.
 Inductively,
suppose $f_{s-1}$
is trivial.  We have the following situation
\[
\xymatrix{
K ( \om ^s P_{s-1}) \ar@(ru,lu)[rr]^-{K(\om^s d_{s})}
\ar[r]
&
\om Y_{s-1}\ar[r] 
\ar@(rd,ld)[rr]_-{*}
&
K( \om^s P_s) \ar[r] \ar@(ru,lu)[rr]^-{K(\om^s d_{s+1})}
&Y_s\ar[d]\ar[r] 
& K(\om^s P_{s+1} )
\\
&&&
Y_{s-1} .
}
\]
Now apply  $[ Z, \? ]$ to this diagram and observe that
Lemma \ref{lem:Maps2KF}, together with 
the  isomorphism 
$\Hom_\U( \om^s P, H) \cong \Hom_\U( P, \s^s H)$
(with $H =\twdl  H^*( Z)$),
gives
\[
\xymatrix{
&& [Z, Y_{s+1}]\ar[d]
\\
\Hom_\U( 
P_{s-1}, \s^s H) 
\ar[r]^-{d_{s}^*}
\ar@(rd,ld)[rr]_{*}
&
\Hom_\U (
P_s, \s^s H ) \ar[r]^-\alpha \ar@(ru,lu)[rr]^(.7){ d_{s+1}^*}  
&[Z,Y_s]\ar[d]\ar[r]^-\beta
& \Hom_\U( 
P_{s+1} , \s^s H)
\\
&&
[Z, Y_{s-1}] .
}
\]
Exactness at $[Z, Y_s]$ implies that the homotopy class
$[f_s]$ is equal to $\alpha( g_s)$ for some 
$g_s\in \Hom_\U (P_s, \s^s H )$.
Since $[f_s]$ is in the image of the vertical map from 
$[Z,Y_{s+1}]$, it is in the kernel of $\beta$, so 
$d_{s+1}^*(g_s) = \beta( [f_s]) = 0$; in other words, 
$g_s$ is a cycle representing an element   
$[g_s] \in \Ext_\U^s(M, \s^s H)$.   Since
$ \Ext_\U^s(M, \s^s H) = 0$, we 
conclude that there is
a 
$g_{s-1}\in 
\Hom_\U(P_{s-1}, \s^s H)$
such that 
 $g_s = d_s^*(g_{s-1})$.  Therefore 
$
[f_s] = \alpha ( d_s^*(g_{s-1}) ) =  [ *]
$.

Since every map $f: Z\to Y$
is trivial on composition to $Y_s$ for each $s$, the 
  exact sequence
$
* \to \limone   [ Z, \om Y_s]
\to 
[  Z, Y^\smsh_p ] 
\to
 \lim [  Z, Y_s] \to *
$
reduces to a surjection 
$
\limone   [ Z, \om Y_s]
\to
[  Z, Y^\smsh_p ]
$, 
and Lemma \ref{lem:CompactLimOneVanish}
finishes the proof.
%
\end{proof}

\subsection{Maps from $B\ZZ/p$ to Odd Spheres}

We are finally able to establish Theorem \ref{thm:blarg} which, by virtue of 
Theorem \ref{thm:SullivanAlgebra},
 implies the weak contractibility
 of the space of maps from $B\ZZ/p$ to   odd spheres.

\begin{proof}[Proof of Theorem \ref{thm:blarg}]
 Write $H = \twdl H^*(X;\FF_p)$; thus
   $H\in \U$ is a reduced module of finite type 
and   $H\otimes J(n)$ is injective for all $n$.
Since $H^*(S^{2n+1}) = U( \s^{2n+1}\FF_p)$, 
  the result follows from applying Lemma \ref{lem:main}(e) and 
Theorem \ref{thm:FakeHarper} to the spaces $Z = \s^t X$ for 
$t\geq 0$.
\end{proof}

%
 

\begin{cor}
\label{cor:SulCon}
$\map_*(B\ZZ/p, S^{2n+1})\sim *$ for all $n\geq 1$.
\end{cor}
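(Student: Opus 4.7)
The plan is to verify that $X = B\ZZ/p$ satisfies each of the hypotheses of Theorem \ref{thm:blarg}, and then simply invoke that theorem. There are four conditions to check: (i) $B\ZZ/p$ is a finite-type CW complex; (ii) $\twdl H^*(B\ZZ/p; \ZZ[\tfrac1p]) = 0$; (iii) $H^*(B\ZZ/p; \FF_p)$ is reduced as an unstable $\A_p$-module; and (iv) $H^*(B\ZZ/p; \FF_p) \otimes J(n)$ is injective in $\U$ for every $n\geq 0$.

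First I would note that $B\ZZ/p$ admits a standard CW model with exactly one cell in each nonnegative dimension (the infinite lens space $L^\infty_p$), so it is of finite type, dispatching (i). For (ii), the integral cohomology of $B\ZZ/p$ is concentrated in nonnegative degrees and is $p$-torsion in positive degrees; tensoring the universal coefficient exact sequence with $\ZZ[\tfrac1p]$ annihilates all $p$-primary torsion, so $\twdl H^*(B\ZZ/p; \ZZ[\tfrac1p]) = 0$.

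Conditions (iii) and (iv) are exactly the content of Theorem \ref{thm:SullivanAlgebra} of Miller and Carlsson, which the paper cites without proof. With all four hypotheses in hand, Theorem \ref{thm:blarg} applied to $X = B\ZZ/p$ yields $\map_*(B\ZZ/p, S^{2n+1}) \sim *$ for every $n\geq 1$, which is exactly the desired conclusion.

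The main obstacle is, of course, the imported algebraic input (Theorem \ref{thm:SullivanAlgebra}): the injectivity of $\twdl H^*(B\ZZ/p;\FF_p) \otimes J(n)$ for all $n$ is a genuinely nontrivial fact about unstable modules over the Steenrod algebra, and it is the source of the depth in the whole argument. Everything else in the proof of the corollary itself is routine bookkeeping; the work is already packaged inside Theorems \ref{thm:SullivanAlgebra} and \ref{thm:blarg}.
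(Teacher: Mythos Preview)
Your proposal is correct and follows essentially the same route as the paper: verify that $B\ZZ/p$ satisfies the hypotheses of Theorem~\ref{thm:blarg} (using Theorem~\ref{thm:SullivanAlgebra} for the algebraic conditions) and then invoke that theorem. If anything, your direct verification that $\twdl H^*(B\ZZ/p;\ZZ[\tfrac1p])=0$ via the $p$-torsion of the integral cohomology is clearer than the paper's terse one-line proof, whose citation of Lemma~\ref{lem:RatTriv} appears to be superfluous (or a slip) for this purpose.
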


\begin{proof}
Lemma \ref{lem:RatTriv} and 
Theorem \ref{thm:SullivanAlgebra}
imply that we may take $X = B\ZZ/p$ in Theorem \ref{thm:blarg}.
\end{proof}


\begin{bibsection}
\begin{biblist}


\bib{MR0085509}{article}{
   author={Barratt, M. G.},
   author={Whitehead, J. H. C.},
   title={The first nonvanishing group of an $(n+1)$-ad},
   journal={Proc. London Math. Soc. (3)},
   volume={6},
   date={1956},
   pages={417--439},
   issn={0024-6115},
   review={\MR{0085509 (19,52c)}},
}

\bib{MR0038654}{article}{
   author={Blakers, A. L.},
   author={Massey, W. S.},
   title={The homotopy groups of a triad. I},
   journal={Ann. of Math. (2)},
   volume={53},
   date={1951},
   pages={161--205},
   issn={0003-486X},
   review={\MR{0038654 (12,435e)}},
}

\bib{MR0044836}{article}{
   author={Blakers, A. L.},
   author={Massey, W. S.},
   title={The homotopy groups of a triad. II},
   journal={Ann. of Math. (2)},
   volume={55},
   date={1952},
   pages={192--201},
   issn={0003-486X},
   review={\MR{0044836 (13,485f)}},
}
   

\bib{MR682060}{article}{
   author={Carlsson, Gunnar},
   title={G. B. Segal's Burnside ring conjecture for $({\bf Z}/2)^{k}$},
   journal={Topology},
   volume={22},
   date={1983},
   number={1},
   pages={83--103},
   issn={0040-9383},
   review={\MR{682060 (84a:55007)}},
   doi={10.1016/0040-9383(83)90046-0},
}

\bib{MR1408539}{article}{
   author={Chach{\'o}lski, Wojciech},
   title={On the functors $CW_A$ and $P_A$},
   journal={Duke Math. J.},
   volume={84},
   date={1996},
   number={3},
   pages={599--631},
   issn={0012-7094},
   review={\MR{1408539 (97i:55023)}},
   doi={10.1215/S0012-7094-96-08419-7},
}

\bib{MR1392221}{book}{
   author={Farjoun, Emmanuel Dror},
   title={Cellular spaces, null spaces and homotopy localization},
   series={Lecture Notes in Mathematics},
   volume={1622},
   publisher={Springer-Verlag},
   place={Berlin},
   date={1996},
   pages={xiv+199},
   isbn={3-540-60604-1},
   review={\MR{1392221 (98f:55010)}},
}

\bib{MR0179791}{article}{
   author={Ganea, T.},
   title={A generalization of the homology and homotopy suspension},
   journal={Comment. Math. Helv.},
   volume={39},
   date={1965},
   pages={295--322},
   issn={0010-2571},
   review={\MR{0179791 (31 \#4033)}},
}

\bib{MR0281202}{article}{
   author={Gray, Brayton},
   title={A note on the Hilton-Milnor theorem},
   journal={Topology},
   volume={10},
   date={1971},
   pages={199--201},
   issn={0040-9383},
   review={\MR{0281202 (43 \#6921)}},
}

\bib{MR0334198}{article}{
   author={Gray, Brayton},
   title={On the homotopy groups of mapping cones},
   journal={Proc. London Math. Soc. (3)},
   volume={26},
   date={1973},
   pages={497--520},
   issn={0024-6115},
   review={\MR{0334198 (48 \#12517)}},
}

\bib{MR1217076}{article}{
   author={Gray, Brayton},
   author={McGibbon, C. A.},
   title={Universal phantom maps},
   journal={Topology},
   volume={32},
   date={1993},
   number={2},
   pages={371--394},
   issn={0040-9383},
   review={\MR{1217076 (94a:55008)}},
   doi={10.1016/0040-9383(93)90027-S},
}

\bib{MR546361}{article}{
   author={Harper, John R.},
   title={$H$-spaces with torsion},
   journal={Mem. Amer. Math. Soc.},
   volume={22},
   date={1979},
   number={223},
   pages={viii+72},
   issn={0065-9266},
   review={\MR{546361 (80k:55033)}},
}



\bib{MR0073181}{article}{
    author={James, I. M.},
     title={Reduced product spaces},
   journal={Ann. of Math. (2)},
    volume={62},
      date={1955},
     pages={170\ndash 197},
      issn={0003-486X},
    review={MR0073181 (17,396b)},
}

\bib{MR1179079}{article}{
   author={Lannes, Jean},
   title={Sur les espaces fonctionnels dont la source est le classifiant
   d'un $p$-groupe ab\'elien \'el\'ementaire},
   language={French},
   note={With an appendix by Michel Zisman},
   journal={Inst. Hautes \'Etudes Sci. Publ. Math.},
   number={75},
   date={1992},
   pages={135--244},
   issn={0073-8301},
   review={\MR{1179079 (93j:55019)}},
}

\bib{MR0226637}{book}{
   author={Massey, W. S.},
   author={Peterson, F. P.},
   title={The ${\rm mod}\ 2$ cohomology structure of certain fibre spaces},
   series={Memoirs of the American Mathematical Society, No. 74},
   publisher={American Mathematical Society},
   place={Providence, R.I.},
   date={1967},
   pages={97},
   review={\MR{0226637 (37 \#2226)}},
}

\bib{MR1361910}{article}{
   author={McGibbon, C. A.},
   title={Phantom maps},
   conference={
      title={Handbook of algebraic topology},
   },
   book={
      publisher={North-Holland},
      place={Amsterdam},
   },
   date={1995},
   pages={1209--1257},
   review={\MR{1361910 (96i:55021)}},
}

\bib{MR1357793}{article}{
   author={McGibbon, C. A.},
   author={Steiner, Richard},
   title={Some questions about the first derived functor of the inverse
   limit},
   journal={J. Pure Appl. Algebra},
   volume={103},
   date={1995},
   number={3},
   pages={325--340},
   issn={0022-4049},
   review={\MR{1357793 (98c:20098)}},
   doi={10.1016/0022-4049(94)00107-T},
}

\bib{MR764593}{article}{
   author={Miller, Haynes},
   title={Massey-Peterson towers and maps from classifying spaces},
   conference={
      title={Algebraic topology, Aarhus 1982},
      address={Aarhus},
      date={1982},
   },
   book={
      series={Lecture Notes in Math.},
      volume={1051},
      publisher={Springer},
      place={Berlin},
   },
   date={1984},
   pages={401--417},
   review={\MR{764593 (86b:55011)}},
   doi={10.1007/BFb0075581},
}

\bib{MR750716}{article}{
   author={Miller, Haynes},
   title={The Sullivan conjecture on maps from classifying spaces},
   journal={Ann. of Math. (2)},
   volume={120},
   date={1984},
   number={1},
   pages={39--87},
   issn={0003-486X},
   review={\MR{750716 (85i:55012)}},
   doi={10.2307/2007071},
}

\bib{MR1282727}{book}{
   author={Schwartz, Lionel},
   title={Unstable modules over the Steenrod algebra and Sullivan's fixed
   point set conjecture},
   series={Chicago Lectures in Mathematics},
   publisher={University of Chicago Press},
   place={Chicago, IL},
   date={1994},
   pages={x+229},
   isbn={0-226-74202-4},
   isbn={0-226-74203-2},
   review={\MR{1282727 (95d:55017)}},
}

\bib{MR2029919}{article}{
   author={Strom, Jeffrey},
   title={Miller spaces and spherical resolvability of finite complexes},
   journal={Fund. Math.},
   volume={178},
   date={2003},
   number={2},
   pages={97--108},
   issn={0016-2736},
   review={\MR{2029919 (2005b:55026)}},
   doi={10.4064/fm178-2-1},
}

\bib{MR0494074}{book}{
   author={Sullivan, Dennis},
   title={Geometric topology. Part I},
   note={Localization, periodicity, and Galois symmetry;
   Revised version},
   publisher={Massachusetts Institute of Technology, Cambridge, Mass.},
   date={1971},
   pages={432 pp. (not consecutively paged)},
   review={\MR{0494074 (58 \#13006a)}},
}

\bib{MR0075589}{article}{
   author={Toda, Hirosi},
   title={Complex of the standard paths and $n$-ad homotopy groups},
   journal={J. Inst. Polytech. Osaka City Univ. Ser. A.},
   volume={6},
   date={1955},
   pages={101--120},
   review={\MR{0075589 (17,773b)}},
}

\bib{MR516508}{book}{
   author={Whitehead, George W.},
   title={Elements of homotopy theory},
   series={Graduate Texts in Mathematics},
   volume={61},
   publisher={Springer-Verlag},
   place={New York},
   date={1978},
   pages={xxi+744},
   isbn={0-387-90336-4},
   review={\MR{516508 (80b:55001)}},
}


\end{biblist}
\end{bibsection}

\end{document}